\newcommand{\Ffield}[2]{\mathbb{F}_{#1^{#2}}}
\newcommand{\Fq}{\mathbb{F}_{q}}
\newcommand{\Fp}{\mathbb{F}_{p}}
\newcommand{\Fl}{\mathbb{F}_{l}}
\author{Pip Goodman}
\title[Restrictions on endomorphism rings of jacobians]{Restrictions on endomorphism rings of jacobians and their minimal fields of definition}
\email{p.a.goodman@bristol.ac.uk}
 \subjclass[2010]{11G10, 14H40, 14K15}
 \keywords{abelian varieties, endomorphism algebras}
\begin{document} 

\maketitle

\begin{abstract}
    Zarhin has extensively studied restrictions placed on the endomorphism algebras of  Jacobians for which the Galois group associated to their 2-torsion is insoluble and ``large''. Here, we examine what happens when this Galois group merely contains an element of ``large'' prime order. As part of this we obtain a partial converse to a result by Guralnick and Kedlaya on the endomorphism field.
\end{abstract}

\section{Introduction}
\label{Intro}

Let $K$ be a number field, and $f \in K[x]$ a polynomial of degree $n$ with no repeated roots. Let $C_{f}$ be a smooth projective model of the smooth affine curve $$y^2 = f(x).$$ Denote the Jacobian of $C_{f}$ by $J_{f}$, and the splitting field of $f$ by $K(f)$.

Zarhin has extensively studied restrictions placed on the endomorphism ring of $J_f$, when the Galois group $\Gal(f)=\Gal(K(f)/K)$ is insoluble and large with respect to $g$, the dimension of $J_f$ \cite{Zar00} \cite{Zar_modreps} \cite{Zarhin_clifford}. 
As part of an algorithm to compute the endomorphism algebra of a genus 2 Jacobian, Lombardo proves that if $f$ is irreducible and has degree 5, then $J_f$ is absolutely simple \cite[Proposition 4.7]{Lom}.

Inspired by the results of Lombardo and Zarhin, we investigate restrictions placed on the endomorphism algebra of $J_f$ when $\Gal(f)$ merely contains an element of ``large'' prime order. Most of our results in fact hold for more general abelian varieties, but for the sake of exposition we state them in this section for odd degree hyperelliptic Jacobians.

We write ${\rm End }(J_f)$ for the ring of $\bar{K}$-endomorphisms of $J_f$ and ${\rm End }^0(J_f)= {\rm End }(J_f) \otimes \QQ$ for its endomorphism algebra. Let $\zeta_n$ denote a primitive $n$-th root of unity.

\begin{theorem}
\label{2_torsion_all_thms_put_together_HE}
Let $p=2g+1$ be a prime such that $2$ is a primitive root modulo $p$. Suppose $f \in K[x]$ is irreducible of degree $p$. Then one of the following holds:
\begin{enumerate}
    \item $E:= \End^0(J_f)$ is a number field, and both
    \begin{enumerate}[a)]
        \item $2$ is totally inert in $E/\QQ$,
        \item the order ${\rm End}(J_f)$ is $2$-maximal in $E$.
    \end{enumerate}
    \item  $J_f$ is isogenous over $\bar{K}$ to the self product of an absolutely simple abelian variety with complex multiplication by a proper subfield of $\QQ(\zeta_p)$.
\end{enumerate}
\end{theorem}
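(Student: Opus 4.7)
The engine is the order-$p$ element $\sigma \in \Gal(K(J_f[2])/K)$, arising because $\Gal(f)$ is transitive of degree $p$. By the standard description of $2$-torsion of odd-degree hyperelliptic Jacobians, $\sigma$ acts on $J_f[2]$ (a $(p-1)$-dimensional $\mathbb{F}_2$-space) via the permutation action on roots, with characteristic polynomial $\Phi_p(x) \bmod 2$. The primitive-root hypothesis makes this polynomial irreducible of degree $p-1 = 2g$, so $J_f[2]$ is $\Gal_K$-simple. Hensel-lifting (valid since $p \ne 2$) produces an order-$p$ element of $\Gal_K$ acting on $T_2 J_f$ with minimal polynomial $\Phi_p$, making $V_2 J_f$ free of rank one over the unramified local field $\QQ_2(\zeta_p)$.

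I first reduce to the absolutely isotypic case. If $J_f \sim_K \prod B_i^{m_i}$ is the $K$-isogeny decomposition into $K$-simple factors, simplicity of $V_2 J_f$ as a $\Gal_K$-module forces a single index $i$ with $m_i = 1$, so $J_f \sim_K B$ for a $K$-simple $B$. Over $\bar K$, $B$ splits into a $\Gal_K$-orbit of absolutely simple isogeny classes of uniform multiplicity; any such orbit has size at most $g < p$, so the $p$-cycle $\sigma$ stabilises each class, and the irreducibility of the $\sigma$-action forces the orbit to be a singleton. Thus $J_f \sim_{\bar K} A^N$ with $A$ absolutely simple; write $E = \End^0(J_f) = M_N(D)$, $D = \End^0(A)$, and $F = Z(D)$.

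Next I pin down $E$ using the $\sigma$-action. Conjugation by $\sigma$ induces an automorphism of $F$ whose order divides $\gcd(p, [F:\QQ]) = 1$ (as $[F:\QQ] \le 2\dim A \le p-1 < p$); hence $\sigma$ commutes with the image of $F \otimes \QQ_2$ in $\End(V_2 J_f)$. The centraliser of $\sigma$ in $\End(V_2 J_f)$ is $\QQ_2[\sigma] = \QQ_2(\zeta_p)$, so $F \otimes \QQ_2$ is a subfield of the unramified extension $\QQ_2(\zeta_p)$, forcing $2$ to be totally inert in $F$. Commutativity of $D$ is then extracted via Skolem--Noether applied to the inner $\sigma$-action on $E \otimes \QQ_2$: write $\sigma = uw$ with $u \in (E \otimes \QQ_2)^\times$ and $w$ in the centraliser of $E \otimes \QQ_2$, commuting with $u$; then $w^p$ lies in $F_v := F \otimes \QQ_2$. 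A degree-$p$ extension $F_v(w)/F_v$ is ruled out by the bound on maximal commutative subfields of the local central simple algebra, which is strictly less than $p$ after an explicit dimension count using $\dim_{\QQ_2} V_2 J_f = p-1$. Hence $w \in F_v$, $\sigma \in E \otimes \QQ_2$, and comparing $F_v[\sigma] = \QQ_2(\zeta_p)$ against the maximal-subfield dimension of $E \otimes \QQ_2$ forces $[D:F] = 1$ and $V_2 A$ free of rank one over $F_v$. So $D = F$ and $A$ is CM by $F$.

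If $N = 1$, then $E = F$ is a field with $2$ totally inert; $2$-maximality of $\End(J_f)$ follows from $T_2 J_f$ being free of rank one over the DVR $\mathbb{Z}_2[\zeta_p] = \mathcal O_{\QQ_2(\zeta_p)}$ (hence free over $\mathcal O_{F_v}$), combined with Faltings' isomorphism $\End(J_f) \otimes \mathbb{Z}_2 \cong \End_{\Gal_L}(T_2 J_f)$ to identify the order with $\mathcal O_{F_v}$. If $N > 1$, then $[F:\QQ] = (p-1)/N < p-1$ and $F$ is a CM field (Albert Type IV), with $F_v$ the unique subfield of $\QQ_2(\zeta_p)$ of degree $(p-1)/N$. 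The main obstacle is the \emph{global} statement $F \subseteq \QQ(\zeta_p)$: the local embedding alone does not determine $F$ as a global field, and promoting it requires CM-theoretic input --- identifying the reflex field via Shimura--Taniyama reciprocity and leveraging that $\sigma$ realises a rational $p$-th cyclotomic structure on $V_2 J_f$ --- to conclude that $F$ embeds globally as the unique CM subfield of $\QQ(\zeta_p)$ of the required degree.
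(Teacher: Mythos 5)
Your route differs genuinely from the paper's (which splits on whether $p \mid [L:K]$, handling one branch via irreducibility of $J_f[2]$ over $L$ together with Schur/Wedderburn, and the other via Silverberg's bound plus a global eigenvalue argument), and the local half of your argument is essentially sound: the reduction to $J_f \sim A^N$, the identification $F \otimes \QQ_2 \hookrightarrow \QQ_2(\zeta_p)$ via the centraliser of the lifted order-$p$ element, the dimension count forcing $D = F$, and the $2$-maximality via $T_2J_f$ being free of rank one over $\ZZ_2[\zeta_p]$ combined with Faltings (a nice alternative to the paper's discriminant argument, though it imports Faltings where the paper uses only elementary representation theory). The problem is the gap you flag yourself in the case $N>1$: you must prove that $F$ is a subfield of $\QQ(\zeta_p)$ \emph{as a global field}, and everything you have established about $F$ is $2$-adic, namely that $F \otimes \QQ_2$ embeds in $\QQ_2(\zeta_p)$, i.e.\ that $2$ is totally inert in $F$. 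That condition is satisfied by infinitely many fields of the right degree (already for $[F:\QQ]=2$, by every $\QQ(\sqrt{d})$ with $d \equiv 5 \bmod 8$), exactly one of which lies in $\QQ(\zeta_p)$, so no amount of local analysis at $2$ can close this. The appeal to Shimura--Taniyama reciprocity is not carried out, and it is not the natural tool here.

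The fix is elementary and you already hold every ingredient. When $N>1$ the conjugation action of $\sigma$ on $\End^0(J_f) = M_N(F)$ cannot be trivial (otherwise $M_N(F)\otimes\QQ_2$ would land in the commutative centraliser $\QQ_2(\zeta_p)$), so $\Gal(L/K)$ contains an element of order exactly $p$ acting as a $\QQ$-algebra automorphism of $M_N(F)$ fixing the centre $F$ pointwise. Apply Skolem--Noether over $\QQ$ rather than over $\QQ_2$: the automorphism is inner, and since $\gcd(p, N\cdot[\text{a maximal subfield of }F:F])=1$ it lifts to an element $M$ of order exactly $p$ in $\GL_N(F)$ (this is the paper's Lemma \ref{PGL_to_GL}). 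The eigenvalues of $M$ are $p$-th roots of unity, at least one of them primitive, and $\Gal(\QQ(\zeta_p)/\QQ(\zeta_p)\cap F)$ permutes them faithfully; counting eigenvalues gives $[\QQ(\zeta_p):\QQ(\zeta_p)\cap F] \le N$, hence $[\QQ(\zeta_p)\cap F:\QQ] \ge (p-1)/N = [F:\QQ]$ and so $F \subseteq \QQ(\zeta_p)$. This is precisely the paper's Lemma \ref{no_element_order_p}, used inside Proposition \ref{deg_min_fiel_def_divisible_by_large_prime}; with that paragraph inserted your proof is complete.
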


 We prove the above theorem in a more general context in \S2, see Theorem \ref{general_l_is_primitive_root}. A major step in the proof of Theorem \ref{general_l_is_primitive_root} is to obtain a partial converse to a theorem of Guralnick and Kedlaya on the possible degrees of the minimal extension $L/K$ over which all endomorphisms of $J_f$ are defined, see Theorem \ref{deg_min_fiel_def_divisible_by_large_prime} and the remark afterwards. 
 
 It should also be noted that if the Galois group of $f$ is known, we can often find restrictions on $[E:\QQ]$ using Remark \ref{remark_divisiblity_restrictions_on_[E:Q]}.
 
 When the dimension of $J_f$ is odd, we are able to relax the condition on the order of $2$ modulo $p$.

\begin{theorem}
\label{2_torsion_all_thms_put_together_HE_g_odd}
Let $g$ be odd, and $p=2g+1$ a prime such the index of $\langle 2 \rangle$ in $(\ZZ/p\ZZ)^*$ is at most 2. Suppose $f \in K[x]$ is irreducible of degree $p$. Then one of the following holds:
\begin{enumerate}
    \item $E := \End^0(J_f)$ is a number field, and provided ${\rm End}(J_f)$ is $2$-maximal, either
    \begin{enumerate}[a)]
        \item $2$ is totally inert in $E/\QQ$, or
        \item \label{not inert} there are two distinct primes above $2$ with equal inertia degree in $E$.
    \end{enumerate}
    \item  $J_f$ is isogenous over $\bar{K}$ to the self product of an absolutely simple abelian variety with complex multiplication by a proper subfield of $\QQ(\zeta_p)$.
\end{enumerate}
\end{theorem}

Again, we prove a more general statement in \S2, see Theorem \ref{2_torsion_all_thms_put_together}. We also prove the following extension of Lombardo's result mentioned earlier. For the more general case of abelian surfaces, see Theorem \ref{ab_surface}. 

\begin{theorem}
\label{deg 5 thm}
Let $f(x) \in K[x]$ be an irreducible polynomial of degree 5. Then one of the following holds:
\begin{enumerate}
\item ${\rm End}(J_{f}) \cong\ZZ$.
\item ${\rm End}(J_{f}) \cong\ZZ[\frac{1+ r \sqrt{D}}{2}]$, where $D \equiv 5 \mod{8}$, $D > 0$ is square-free and $2 \nmid r$.
\item ${\rm End}(J_{f}) \cong R$, where $R$ is a 2-maximal order in a degree 4 CM field, which is totally inert at 2.
\end{enumerate}
In particular $J_{f}$ is absolutely simple and does not have quaternion multiplication over $\bar{K}$.
\end{theorem}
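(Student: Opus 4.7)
The plan is to apply Theorem \ref{2_torsion_all_thms_put_together_HE} with $p = 5$ and $g = 2$. Its hypothesis is satisfied since $2$ has order $4$ in $(\ZZ/5\ZZ)^*$, making $2$ a primitive root modulo $5$. The theorem leaves two alternatives. Case (2) of that theorem would require an absolutely simple abelian variety with complex multiplication by a proper subfield of $\QQ(\zeta_5)$; but the proper subfields of $\QQ(\zeta_5)$ are $\QQ$ and $\QQ(\sqrt{5})$, both totally real and hence not CM fields, so this case is impossible. We are left with case (1): $E := \End^0(J_f)$ is a number field in which $2$ is totally inert, and $\End(J_f)$ is $2$-maximal in $E$.

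Next I would pin down $E$ via Albert's classification for abelian surfaces. Since $E$ is a field, $J_f$ must be absolutely simple: otherwise $J_f$ would be isogenous over $\Bar K$ to a product of two elliptic curves, and $\End^0(J_f)$ would be either a matrix algebra of the form $M_2(F)$ or a product of two number fields, neither of which is itself a field. By Albert's classification for absolutely simple abelian surfaces, the options for $E$ are $\QQ$, a real quadratic field, or a quartic CM field---the remaining option of an indefinite quaternion algebra over $\QQ$ is excluded because $E$ is commutative. These three possibilities correspond directly to conclusions (1), (2) and (3) of the theorem.

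The only step requiring additional calculation is the real quadratic case: writing $E = \QQ(\sqrt{D})$ for a square-free integer $D$, inertness of $2$ is equivalent to $D \equiv 5 \pmod 8$, which forces $D > 0$ (so that $E$ is totally real, as it must be). A direct check shows that the $2$-maximal orders in such a field---those of conductor prime to $2$---are exactly the orders $\ZZ[\tfrac{1 + r\sqrt{D}}{2}]$ with $r$ odd. The concluding ``in particular'' assertions then follow at once: absolute simplicity was established above, and quaternion multiplication is ruled out because $\End^0(J_f)$ is a field. The main conceptual step is really the invocation of Theorem \ref{2_torsion_all_thms_put_together_HE}; beyond that, the proof is essentially bookkeeping.
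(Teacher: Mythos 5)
Your proof follows essentially the same route as the paper: Theorem \ref{2_torsion_all_thms_put_together_HE} (equivalently, Theorem \ref{general_l_is_primitive_root} with $l=2$) lands you in case (1) because the proper subfields of $\QQ(\zeta_5)$ are totally real --- this is exactly the paper's Corollary \ref{5_does_not_divide_deg_of_extension} --- and the remainder is the classification of commutative endomorphism algebras of simple abelian surfaces combined with the inertness and $2$-maximality conditions, which is the content of the paper's Theorem \ref{ab_surface}.

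One point of justification is thinner than it should be: you exclude $E$ being an imaginary quadratic field (equivalently, you assert $D>0$) by appeal to ``Albert's classification.'' Albert's restriction table by itself does not rule out Type IV with $e_0=d=1$, i.e.\ an imaginary quadratic field, for a simple abelian surface; that exclusion is one of Shimura's refinements, and the paper cites \S4 of \cite{Shimura63_albert_paper} for precisely this step. Relatedly, the clause ``$D\equiv 5\pmod 8$, which forces $D>0$'' is not literally true (take $D=-3$, in which $2$ is inert); the positivity comes from the Shimura exclusion, not from the congruence. Neither issue changes the structure or the correctness of the argument once the correct reference is supplied.
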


\begin{example}
\label{genus_2_examples}
Keeping the notation of Theorem \ref{deg 5 thm}, we have the following examples:

\begin{table}[ht]

\centering 
\begin{tabular}{c c c } 
\hline
 ${\rm Gal}(f)$ & ${\rm End}(J_f)$ & $f(x)$\\ [0.5ex] 
\hline 
$D_5 $ & $\ZZ[\frac{1+\sqrt{13}}{2}]$ & $x^5-19x^4+107x^3+95x^2+88x-16$ \\ 
$F_5$ & $\ZZ[\frac{1+\sqrt{5}}{2}]$ & $x^5+10x^3+20x+5$ \\
$F_5$ & $\ZZ[\zeta_5]$ & $x^5 - 2$ \\ 
$F_5$ & $R$ & $ 52x^5 + 104x^4 + 104x^3 + 52x^2 + 12x + 1$  \\ [1ex] 
\hline 
\end{tabular}
\end{table}

\noindent where $R$ is the maximal order of the CM number field with defining polynomial $x^4 + x^3 + 2x^2 - 4x + 3$. We note that this field is cyclic, ramified only at 13, and 2 generates a maximal ideal.

\end{example}

Specifying the Galois group of $f$, we are able to refine Theorem \ref{deg 5 thm}. The proof of the following theorem follows from Corollary \ref{Field_def_AGL(1,q)}, Propositions \ref{field_def_CM_Dm} and \ref{field_def_C5}, and Corollary \ref{C5_with_CM}.

We denote by $L$ the minimal extension of $K$ over which all endomorphisms of $J_f$ are defined. Following \cite{GK17} we call $L$ the \emph{endomorphism field} of $J_f$. It is well-known that $L/K$ is finite and Galois  \cite[Theorem 4.1]{Silverberg}.

\begin{theorem}
\label{Fields_of_definition_thm}
Let $f(x) \in K[x]$ be a polynomial of degree 5.
\begin{enumerate}
    \item Suppose ${\rm Gal}(f) \cong F_5$. The following hold:
    \begin{enumerate}[(i)]
        \item If  ${\rm End}^0(J_{f})$ is a real quadratic field, then $L$ is the unique degree 2 extension of $K$ contained in $K(f)$.
        \item If  ${\rm End}^0(J_{f})\cong E$ is a degree 4 CM field, then it is cyclic and $L$ is the unique degree 4 extension of $K$ contained in $K(f)$. Moreover, $L=EK$.
    \end{enumerate}
    \item Suppose ${\rm Gal}(f) \cong D_5$. If ${\rm End}^0(J_{f})$ is a degree 4 CM field, then $L$ contains the unique degree 2 extension of $K$ contained in $K(f)$.
    \item Suppose ${\rm Gal}(f) \cong C_5$. Then $[L:K] \leq 2$. Furthermore, if ${\rm End}^0(J_{f})$ is a degree 4 CM field, then $K$ contains a  real quadratic field with discriminant congruent to 5 modulo 8.
\end{enumerate}
In particular, if ${\rm Gal}(f) \cong F_5$, then we have an explicit list, depending on $f$, for all possible $L$.

\end{theorem}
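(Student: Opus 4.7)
The plan is to analyze the conjugation action of $G_K = \Gal(\bar K/K)$ on $E := \End^0(J_f)$. Since this action is trivial precisely on $G_L$, it induces a faithful embedding $\Gal(L/K) \hookrightarrow \operatorname{Aut}(E)$. Combined with Theorem \ref{deg 5 thm}, which restricts $E$ to $\QQ$, a real quadratic field $\QQ(\sqrt D)$ with $D \equiv 5 \pmod 8$, or a quartic CM field in which $2$ is totally inert, this yields the bound $[L:K] \mid |\operatorname{Aut}(E)|$: at most $2$ in the real quadratic case and at most $4$ in the quartic CM case.

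For case (1), with $\Gal(f) \cong F_5$, I would exploit the total inertness of $2$ in $E$ to equip $J_f[2]$ with the structure of a free $\mathcal{O}_E/2$-module, where $\mathcal{O}_E/2 \cong \mathbb{F}_4$ in (i) and $\mathcal{O}_E/2 \cong \mathbb{F}_{16}$ in (ii). Since $J_f[2]$ is spanned by differences of roots of $f$, the group $\Gal(K(f)/K) \cong F_5$ acts on $J_f[2]$, and the induced conjugation action on the $\mathcal{O}_E/2$-structure factors through $\Gal(L/K) \hookrightarrow \Gal((\mathcal{O}_E/2)/\mathbb{F}_2)$. The subgroup of $F_5$ acting $\mathcal{O}_E/2$-linearly must then be the unique index-$2$ subgroup $D_5$ in (i), and the unique index-$4$ subgroup $C_5$ in (ii); its fixed field is precisely the predicted $L$. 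The cyclicity of $E$ in (ii) is forced by the requirement that $\operatorname{Aut}(E) \supseteq C_4$, which rules out both biquadratic and non-Galois quartic CM fields.

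For case (2) with $\Gal(f) \cong D_5$, the same $\mathbb{F}_{16}$-structure analysis forces $\Gal(L/K)$ to surject non-trivially onto the quotient $D_5/C_5 \cong C_2$, so $L$ contains the unique degree-$2$ subextension of $K(f)$. For case (3) with $\Gal(f) \cong C_5$, the odd order of $C_5$ forces its image in the $2$-group $\operatorname{Aut}(E)$ to be trivial, so $L$ is linearly disjoint from $K(f)$ over $K$ and $[L:K] \leq |\operatorname{Aut}(E)|$; the sharper bound $[L:K] \leq 2$ in the quartic CM case should follow by ruling out a full $C_4$-action using compatibility with the mod-$2$ cyclotomic character. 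Finally, the discriminant condition $D \equiv 5 \pmod 8$ comes from identifying the fixed field of the nontrivial $C_2 \subset \operatorname{Aut}(E)$ as a real quadratic subfield of $K$, whose inertness of $2$ forces the stated congruence. The main obstacle I anticipate is showing that the Galois action on the $\mathcal{O}_E/2$-structure realizes the full predicted quotient (rather than a proper subquotient) in each case — ruling out the possibility that further endomorphisms descend to $K$ — which requires combining the rigidity of the $F_5$-representation on $J_f[2]$ with determinant constraints from the Weil pairing.
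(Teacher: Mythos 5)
There is a genuine gap, and it is precisely the one you flag at the end as an ``anticipated obstacle.'' Your upper bound $[L:K]\mid |{\rm Aut}(E)|$ matches the paper's, but the lower bound --- showing that the subgroup of $F_5$ (resp.\ $D_5$) acting $\mathcal{O}_E/2$-linearly on $J_f[2]$ has index exactly $[E:\QQ]$ rather than something smaller, i.e.\ that no extra endomorphisms are already defined over a proper subfield of the predicted $L$ --- is the entire content of parts (1) and (2), and you give no argument for it. The paper's mechanism is Proposition \ref{bound_on_dim_over_K}: Zarhin's identification of $J_f[2]$ with the deleted permutation module $\FF_2[R_f]^0$, the count $\dim_{\FF_2}{\rm End}_{G}(\FF_2[R_f])=s+1$ where $s$ is the number of orbits of a point stabiliser (Lemma \ref{reptheory_bound_on_dim}), and the Burnside computation showing that an index-$d$ subgroup of a sharply $2$-transitive Frobenius group has point stabilisers with exactly $d$ orbits on the remaining points (Lemma \ref{Number_of_orbits_of_subgroup}). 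Applied to $F'=L\cap K(f)$ of degree $d$ over $K$, this yields $[E:\QQ]=\dim_{\QQ}\End^0_{F'}(J_f)\le d$, which is exactly the inequality your sketch needs; ``rigidity of the $F_5$-representation'' and ``determinant constraints from the Weil pairing'' are not substitutes for it. (One also needs the upper bound $[L:K]\le s$ to upgrade $L\cap K(f)=F$ to $L=F$; that part you do have.)

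Case (3) is further off track. There is no natural map from $C_5={\rm Gal}(K(f)/K)$ to ${\rm Aut}(E)$, so ``the odd order of $C_5$ forces its image in ${\rm Aut}(E)$ to be trivial'' does not parse; the paper instead uses $L\subseteq K(A[4])$ (Silverberg) so that ${\rm Gal}(L/K)$ is an extension of a subgroup of $C_5$ by an elementary abelian $2$-group (Proposition \ref{minimalfield_2group}), forcing ${\rm Gal}(L/K)$ to have exponent $2$; the remaining case to exclude is then $C_2\times C_2$, not $C_4$, and it is excluded via primitivity of the CM type (Shimura), not via the mod-$2$ cyclotomic character. Moreover, the existence of a real quadratic subfield of $K$ comes from the main theorem of CM ($L=E^*K$ with $E^*$ the reflex field), and the field contained in $K$ is the real quadratic subfield of the \emph{reflex} $E^*$, not of $E$. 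When $E$ is non-Galois these two quadratic fields differ, and transferring ``$2$ is inert'' from $E$ to $E^*$ is the whole point of Corollary \ref{C5_with_CM} (a decomposition-group computation at $2$ in the $D_4$-extension $EE^*/\QQ$). Your identification of the relevant field as the fixed field of the nontrivial $C_2\subset{\rm Aut}(E)$ produces a subfield of $E$, not of $K$, and in the non-Galois case yields the congruence for the wrong discriminant.
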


\begin{remark}
If $f$ has degree $6$, then part (3) of the above theorem remains valid.
\end{remark}

\begin{remark}
As the natural action of $F_5$ contains odd permutations, we have  in case $ (1 i)$ that $L = K(\sqrt{\Delta_f})$, where $\Delta_f $ denotes the discriminant of $f$.
\end{remark}

\begin{example}
Using the above remark along with magma we produced the table below of examples of case $ (1 i)$ in Theorem \ref{Fields_of_definition_thm}. Indeed, all polynomials $f(x)$ listed are of degree 5, have Galois group $F_5$ over $\QQ$ and satisfy $\End(J_f) \cong \ZZ[\frac{1+\sqrt{5}}{2}]$.

\begin{table}[ht]
\centering 
\begin{tabular}{c c } 
\hline
 Endomorphism field & $f(x)$ \\ [0.5ex] 
\hline 
$\QQ(\sqrt{2})$ & $x^5 - 14x^3 - 84x^2 + 81x - 28$ \\ 
$\QQ(\sqrt{5})$ & $x^5 -5x^3 + 5x - 4$\\
$\QQ(\sqrt{13})$ & $x^5 - 4x^3 - 46x^2 - 44x - 194$ \\
$\QQ(\sqrt{17})$ & $x^5 - 2x^4 + 67x^3 - 250x^2 + 488x - 235$ \\
$\QQ(\sqrt{29})$ & $x^5 - x^4 + 4x^3 + 106x^2 - 97x + 669$ \\
$\QQ(\sqrt{41})$ & $x^5 - x^4 + 29x^3 - 1025x^2 - 3154x - 17714$\\
$\QQ(\sqrt{53})$ & $x^5 - x^4 + 54x^3 - 167x^2 + 1018x - 69$ \\ [1ex] 
\hline 
\end{tabular}
\end{table}


\end{example}

The examples provided in this paper come from either the database \cite{LMFDB_genus2_curves}, made easily available on \cite{lmfdb}, or were computed on magma \cite{magma} using code from Molin and Neurohr's article \cite{Molin_Neurohr_Period_matrices} and polynomials found in \cite{Jensen_Ledet_Yui} and \cite{LMFDB_deg_5_fields}, the second of which was also accessed through the LMFDB. 

\begin{acknowledgements}
The author would like to thank the organisers of the summer school `Explicit and computational approaches to Galois representations' held at the University of Luxembourg,  where this work began. The author would also like to thank Tim Dokchitser, Davide Lombardo and the anonymous referee for their useful comments and suggestions. This work was supported by the \hbox{EPSRC}.
\end{acknowledgements}

\section{Upper bounds on endomorphism rings}
In what follows $K$ is a number field, $A$ is an abelian variety over $K$ of dimension $g$ and $l \in \ZZ$ is a prime. If $B/K$ is also an abelian variety, then we say $A$ is isogenous to $B$ and write $A \thicksim B$ to mean $A$ is isogenous to $B$ over $\bar{K}$. We denote by ${\rm End }(A)$ the ring of $\bar{K}$-endomorphisms of $A$ and ${\rm End }^0(A)= {\rm End }(A) \otimes \QQ$ for its endomorphism algebra.

The absolute Galois group $G_K = {\rm Gal }(\bar{K}/K)$ acts on ${\rm End }(A)$ and hence on ${\rm End }^0(A)$. Its invariant subspace ${\rm End }(A)^{G_K} ={\rm End }_K(A) $ is the ring of  $K$-endomorphisms of $A$. Likewise we write ${\rm End }^0(A)^{G_K} ={\rm End }^0_K(A) $.

\begin{lemma}
 \label{alg_is_not_quaternion}
 Let $\OO$ be an order  in a division algebra $E = \OO \otimes \QQ$ of dimension $nm^2$ over $\QQ$, where $n=[Z(E):\QQ]$. Then any division algebra contained in $\OO \otimes \ZZ/l\ZZ$ has dimension less than or equal to $nm$ over $\Fl$.
 
 In particular, if $\OO \otimes \ZZ/l\ZZ$ is a division algebra then $m=1$, i.e., $E$ is a field.
 \end{lemma}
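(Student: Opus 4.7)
The plan is to reduce to a statement about subfields via Wedderburn's little theorem, then to lift a primitive element back to $\OO$ and exploit the standard bound on subfield degrees inside a central simple algebra.

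By Wedderburn's little theorem, every finite division algebra is a field; hence any division subalgebra $D$ of $\OO/l\OO$ of $\Fl$-dimension $d$ is the finite field $\Ffield{l}{d}$. By the primitive element theorem, $D = \Fl(\alpha)$ for some $\alpha$, and $d$ is the degree of the minimal polynomial $h(x) \in \Fl[x]$ of $\alpha$ over $\Fl$. I would then lift $\alpha$ to some $\tilde\alpha \in \OO$ and let $g(x) \in \ZZ[x]$ be its minimal polynomial over $\ZZ$. Since $E$ is a division algebra, $\QQ[\tilde\alpha]$ is a finite-dimensional commutative domain inside $E$, hence a subfield, of degree $\deg g$ over $\QQ$. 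Reducing $g$ modulo $l$ gives $\bar g(\alpha)=0$ in $\OO/l\OO$, so $h \mid \bar g$ in $\Fl[x]$, and consequently $d = \deg h \leq \deg g = [\QQ(\tilde\alpha):\QQ]$.

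The remaining task is to show $[\QQ(\tilde\alpha):\QQ] \leq nm$. I would argue this by passing to the compositum $L = \QQ(\tilde\alpha)\cdot Z(E)$ inside $E$. As $\tilde\alpha$ commutes with $Z(E)$, $L$ is a commutative, finite-dimensional domain contained in $E$, hence again a subfield, and now one containing $Z(E)$. The standard structural fact that every subfield of a central simple algebra of degree $m$ over its center has degree at most $m$ over that center then gives $[L:Z(E)] \leq m$, whence $[\QQ(\tilde\alpha):\QQ] \leq [L:\QQ] \leq nm$. The only mildly subtle point is that $\QQ[\tilde\alpha]$ need not contain $Z(E)$ a priori, which is precisely why one replaces it with the compositum $L$; beyond that, the argument is essentially mechanical.
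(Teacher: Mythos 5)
Your proof is correct and follows essentially the same route as the paper: reduce to a finite field via Wedderburn's little theorem, lift a generator to the order, and bound its degree using the fact that the subfield $Z(E)(\tilde\alpha)$ (your compositum $L$) has dimension at most $nm$ over $\QQ$. The paper's version is just a terser rendering of the same argument.
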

 
\begin{proof}
Let $R \subseteq \OO \otimes \ZZ / l\ZZ$ be a division algebra of maximal dimension. As $R$ is a finite division algebra, we have by Wedderburn's Theorem that $R\cong \Ffield{l}{d}$ for some $d$. Hence, the  elements of $R$ have minimal polynomials of degree at most $d$ over $\Fl$. In addition, there is $\bar{a} \in R$, whose minimal polynomial over $\Fl$ has degree $d$.

Let us choose $a \in \OO$ such that its image in $\OO \otimes \ZZ/l\ZZ$ coincides with $\bar{a}$. Then $Z(E)(a)$ is a field, so has dimension over $\QQ$ less than or equal to $nm$ \cite[Corollary 3.17, page 96]{noncomalg}. Hence the minimal polynomial of $a$ over $\QQ$ has degree at most $nm$. Since this must be also hold true for the image of $a$ in $\OO \otimes \ZZ / l\ZZ$, we obtain $d \leq nm$.
\end{proof}

We say that an order $\OO$ in a number field $E$ is \emph{$l$-maximal} if $l$ does not divide the index of $\OO$ in the ring of integers of $E$.

\begin{proposition}
\label{l-maximal}
 Suppose that $G_K = {\rm Gal} (\bar{K}/K)$ acts irreducibly on $A[l]$. Then the following hold:
\begin{enumerate}
    \item $E = {\rm End}^0_K(A)$ is a number field.
    \item The order ${\rm End}_K(A)$ is $l$-maximal in $E$.
    \item $l$ is totally inert in $E/\QQ$.
\end{enumerate}
In particular, $A$ is simple over $K$ and does not have quaternion multiplication over $K$.
\end{proposition}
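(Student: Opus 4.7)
The main novelty relative to Lemma \ref{inert_prime} is part (1): once $E = \End^0_K(A)$ is known to be a number field, the proof of Lemma \ref{inert_prime} gives (2) and (3) verbatim, and the ``in particular'' clause is automatic since $E$ being a field forces $A$ to be $K$-simple (via Poincar\'e reducibility, as $\End^0_K(A)$ must be a division algebra for a $K$-simple variety) and rules out noncommutative division algebras like quaternions. So the plan is to focus entirely on (1) and then invoke Lemma \ref{inert_prime}.

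To prove (1), the plan is to produce a single element of $\End_K(A)$ generating $E$ as a $\QQ$-algebra. First I would show that the natural map
\[
\End_K(A) \otimes \Fl \longrightarrow \End_{\Fl[G_K]}(A[l])
\]
is injective. An endomorphism $\phi \in \End_K(A)$ that kills $A[l]$ factors as $\phi = \psi \circ [l]$ through the $K$-isogeny $[l] \colon A \to A$; Galois-equivariance of $\phi$ and $[l]$ forces $\psi \in \End_K(A)$, so the kernel is precisely $l \cdot \End_K(A)$. Next, by hypothesis $A[l]$ is an irreducible $\Fl[G_K]$-module, so Schur's lemma identifies $\End_{\Fl[G_K]}(A[l])$ with a finite division ring, which by Wedderburn's theorem is a field. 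Hence $R := \End_K(A) \otimes \Fl$ embeds into a field and is itself a field.

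Now let $n = \dim_\QQ E$; since $\End_K(A)$ is a $\ZZ$-lattice in $E$ of rank $n$, the field $R$ has $\Fl$-dimension $n$, so $R \cong \Ffield{l}{n}$. Choose $\alpha \in \End_K(A)$ whose reduction generates $R$ over $\Fl$. Its minimal polynomial $\mu \in \ZZ[x]$ reduces modulo $l$ to an irreducible polynomial of degree $n$, so $\deg \mu \geq n$; but $\alpha \in E$ forces $\deg \mu \leq n$, giving $E = \QQ(\alpha)$, a number field. Applying Lemma \ref{inert_prime} closes out (2) and (3).

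The only step that requires care is the injectivity of the reduction map and the identification of $R$ with a full-dimensional subfield; both are standard, so I do not anticipate a serious obstacle. The argument is essentially a refinement of Lemma \ref{inert_prime}: whereas that lemma took ``$E$ is a number field'' as an input and used commutativity of $R$ to prove $l$-maximality, here the same commutativity (combined with the rank equality) is used to \emph{deduce} that $E$ is a field in the first place.
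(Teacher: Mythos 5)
Your proof is correct, and while it shares the paper's skeleton (Schur's Lemma plus Wedderburn to see that $\End_{G_K}(A[l])$, and hence $R=\End_K(A)\otimes\Fl$, is a field, followed by an appeal to Lemma \ref{inert_prime} for parts (2) and (3)), the step that produces part (1) is genuinely different. The paper argues that since $R$ is a field, $E$ has no nontrivial idempotents and hence is a division algebra, and then invokes Lemma \ref{alg_is_not_quaternion}: $R$ is a field of full dimension $nm^2$ over $\Fl$, while any field inside $\End_K(A)\otimes\Fl$ has dimension at most $nm$, forcing $m=1$. You instead lift a multiplicative generator $\bar\alpha$ of $R\cong\Ffield{l}{n}$ to $\alpha\in\End_K(A)$ and squeeze $\deg\mu$ between $n$ from below (since $\bar\alpha$ has degree $n$ over $\Fl$ and satisfies $\bar\mu$) and $n$ from above (by linear dependence in the $n$-dimensional algebra $E$), concluding $E=\QQ(\alpha)$ outright. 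This bypasses both the idempotent/division-algebra step and Lemma \ref{alg_is_not_quaternion}, and as a bonus exhibits $E$ as monogenic with a minimal polynomial irreducible modulo $l$, which essentially already contains parts (2) and (3). Your explicit verification that $\End_K(A)\otimes\Fl\to\End(A[l])$ is injective (via the factorisation $\phi=\psi\circ[l]$ and Galois-equivariance of $\psi$) is a detail the paper leaves implicit. One small ordering point: you should deduce that $\bar\mu$ is irreducible of degree $n$ only after establishing $\deg\mu\le n$; a priori $\bar\mu$ is merely a monic polynomial of degree $\deg\mu$ divisible by the degree-$n$ minimal polynomial of $\bar\alpha$. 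Both inequalities appear in your write-up, so this does not affect the validity of the argument.
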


\begin{proof}
As $G_K$ acts irreducibly on $A[l]$, Schur's Lemma gives us that the endomorphism algebra $\End_K(A[l]) = \End(A[l])^{G_K}$ is a division algebra. Furthermore, $\End_K(A[l])$ is finite, so by Wedderburn's Theorem it is a field. Hence the subalgebra $R:= {\rm End}_K(A) \otimes \ZZ/l\ZZ$ of $\End_K(A[l])$ is  also a field. It follows that $E = {\rm End}^0_K(A)$ contains no idempotents, and hence is a division algebra. By Lemma \ref{alg_is_not_quaternion}, $E$ is a number field.

As $R$ is a finite field of characteristic $l$, we have $R\cong \Ffield{l}{d}$ where $d = [E:\QQ]$. Hence $R$ contains elements of degree $d$. As $[E:\QQ] = d$ the preimages of these elements in $E$ must also have minimal polynomials of degree $d$. Let $\alpha \in {\rm End}_K(A)$ be one such element, and $\mu$ be its minimal polynomial. Note that $\alpha \otimes 1$ satisfies $\bar{\mu}$, the reduction of $\mu$ modulo $l$, and as $\alpha \otimes 1$ has degree $d$ over $\Fl$, we have that $\bar{\mu}$ is irreducible. Since $\Fl$ is a perfect field, $\bar{\mu}$ does not have repeated roots, hence $l \nmid {\rm disc}(\mu)$. Thus $\ZZ[\alpha]$ is an $l$-maximal order in $E$, and so is ${\rm End}_K(A)$.

Let $\lambda $ be a prime above $l$ in $E$. As we are considering endomorphisms defined over $K$, we have $A[\lambda]$ is a non-zero $G_K$ submodule of $A[l]$. But $G_K$ acts irreducibly on $A[l]$, so  $A[l]=A[\lambda]$. We conclude $l$ is totally inert in $E/\QQ$, see \cite[\S 5.1, Prop 2, pg 36]{Shimura_Book_Ab_vars_with_CM}.
\end{proof}

\begin{remark}
\label{remark_divisiblity_restrictions_on_[E:Q]}
In the proof of Proposition \ref{l-maximal}, we show $\FF = \End_K(A[l])$ is a field and $\End_K(A) \otimes \ZZ/l\ZZ$ is a subfield of $\FF$. As $\End_K(A)$ is a free $\ZZ$-module,  this shows $[E: \QQ]$ divides $[\FF:\Fl]$. It follows that if we  can determine $\End(A[l])^{G_K}=\End_K(A[l])$, then we can also limit the possible values of $[E:\QQ]$.
\end{remark}

We now briefly discuss the endomorphism field of an abelian variety, that is, the minimal extension of the base field over which all endomorphisms are defined.

The action of the absolute Galois group $G_K$ on $\End(A)$  induces a representation
$$ G_K \rightarrow {\rm Aut}\left(\End(A)\right)$$
with kernel ${\rm Gal}(\bar{K}/L)$, where $L$ is the endomorphism field of $A$.
In particular, we obtain an injection of abstract groups
$$ {\rm Gal}(L/K) \hookrightarrow {\rm Aut}\left(\End(A)\right).$$

Thus if ${\rm Gal}(L/K)$ is large with respect to $g$, the dimension of $A$, then one might expect the structure of $\End(A)$ to be constrained. The following theorem shows this is indeed the case. But first we need a couple of lemmas.

For a division ring $E$, we shall write $\GL_d(E)$ for the invertible elements in $M_d(E)$, and $\PGL_d(E)$ for the quotient of $\GL_d(E)$ by its centre.

\begin{lemma}
\label{PGL_to_GL}
Let $E$ be a division ring, and $F$ a maximal subfield of $E$. Write $m=[E:F]$. Suppose $\PGL_d(E)$ contains an element $\bar{M}$ of order $n$, with $n$ coprime to $md$. Then $\GL_d(E)$ contains an element of order $n$.
\end{lemma}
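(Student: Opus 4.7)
The plan is to lift $\bar M$ to some $M \in \GL_d(E)$ and then rescale $M$ by a suitable central element so that the rescaled matrix has order exactly $n$. First, I would observe that since $\bar M^n = 1$ in $\PGL_d(E)$, the matrix $M^n$ lies in the kernel of $\GL_d(E) \twoheadrightarrow \PGL_d(E)$, i.e.\ in the centre, which consists of scalar matrices $zI$ with $z \in Z(E)$. So $M^n = zI$ for some $z \in Z(E)^\times$.

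The goal becomes: produce $c \in Z(E)^\times$ with $c^n = z$. Then $M' := c^{-1}M$ satisfies $(M')^n = I$, and because $M'$ and $M$ have the same image $\bar M$ (of order exactly $n$) in $\PGL_d(E)$, the order of $M'$ is divisible by $n$, hence equal to $n$.

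To extract such a $c$, I would apply the reduced norm $\Nrd \colon M_d(E) \to Z(E)$. Since $F$ is a maximal subfield of $E$ with $[E:F] = m$, we have $[E:Z(E)] = m^2$, and after base change to an algebraic closure of $Z(E)$ the algebra $M_d(E)$ becomes $M_{dm}$; in particular $\Nrd(zI) = z^{dm}$ for any central scalar $z$. Taking the reduced norm of the identity $M^n = zI$ yields
\[
\Nrd(M)^n \;=\; z^{dm}.
\]
Because $\gcd(n,dm) = 1$, Bézout gives integers $a,b$ with $an + bdm = 1$. Then
\[
z \;=\; z^{an}\, z^{bdm} \;=\; z^{an}\,\Nrd(M)^{bn} \;=\; \bigl(z^{a}\Nrd(M)^{b}\bigr)^{n},
\]
so $c := z^{a}\Nrd(M)^{b} \in Z(E)^\times$ is the required $n$-th root of $z$, and the proof concludes as above.

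The only real obstacle is showing the $n$-th root of $z$ lies inside the centre $Z(E)$ itself rather than merely in some extension; this is exactly what the hypothesis $\gcd(n,md)=1$ buys us, via the reduced norm identity $\Nrd(M)^n = z^{dm}$. Everything else (the central position of $M^n$, and the order of $M'$ being forced up from its image in $\PGL_d(E)$) is essentially formal.
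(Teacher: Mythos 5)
Your proposal is correct and follows essentially the same route as the paper: lift $\bar M$ to $M$ with $M^n = zI_d$ central, apply the reduced norm to get $\Nrd(M)^n = z^{dm}$, use $\gcd(n,dm)=1$ to produce an $n$-th root $c$ of $z$ inside $Z(E)^\times$, and rescale. Your explicit B\'ezout computation $c = z^a\Nrd(M)^b$ makes precise a step the paper states somewhat tersely, but the argument is the same.
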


\begin{proof}
Let $M$ be a lift of $\bar{M}$ to $\GL_d(E)$. We have $M^n = kI_d$ for some $k \in Z(E)^*$. Taking the reduced norm gives ${\rm Nrd}(M)^n = k^{md}$. As $n$ and $md$ are coprime there exists integers $a,b$ such that $an+bmd = 1$. Let $c = k^a {\rm Nrd}(M)^b$, then $c^n = k$ and as ${\rm Nrd}(M)\in Z(E)^*$ we have $c \in Z(E)^*$. The element $\frac{1}{c}M \in \GL_d(E)$ then has order $n$. Indeed,  $(\frac{1}{c}M)^n = I_d$, and if $(\frac{1}{c}M)^{n'} = I_d$ for some $n' < n$, then $\bar{M}$ would not have order $n$. 
\end{proof}

\begin{lemma}
\label{no_element_order_p}
Let $p=2g+1$ be prime, $d$ a divisor of $2g$, and $E$ be a division algebra of dimension less than or equal to $ 2g/d$ over $\QQ$. Suppose $\Aut_{\QQ}(M_d(E))$ contains an element of order $p$. Then $E$ is isomorphic to a subfield of  $\QQ(\zeta_p)$ and has degree $2g/d$.
\end{lemma}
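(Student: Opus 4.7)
The plan is to produce an honest embedding $\QQ(\zeta_p) \hookrightarrow M_d(E)$ and then use the standard simple module $V = E^d$ together with a centralizer argument to force $E$ itself into $\QQ(\zeta_p)$.

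By Skolem--Noether, every $Z(E)$-algebra automorphism of $M_d(E)$ is inner, giving an exact sequence
$$1 \to \PGL_d(E) \to \Aut_{\QQ}(M_d(E)) \to \Aut_{\QQ}(Z(E)).$$
Since $|\Aut_{\QQ}(Z(E))| \leq [Z(E):\QQ] \leq [E:\QQ] \leq 2g < p$, any order-$p$ element $\sigma$ must lie in $\PGL_d(E)$. To lift $\sigma$ to $\GL_d(E)$ using Lemma \ref{PGL_to_GL}, we need $\gcd(p, md) = 1$, where $m = [E:F]$ for a maximal subfield $F$ of $E$. Since $m^2 \leq [E:\QQ] \leq 2g/d$, we have $md \leq \sqrt{2gd} \leq g\sqrt{2} < p$, so coprimality is automatic.

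Hence we obtain $M \in \GL_d(E) \subseteq M_d(E)$ of order exactly $p$. Because $M \neq 1$, $M^p = 1$, and the $p$-th cyclotomic polynomial $\Phi_p$ is irreducible over $\QQ$, the minimal polynomial of $M$ must be $\Phi_p$, and $\QQ[M] \cong \QQ(\zeta_p)$ embeds in $M_d(E)$. View $V = E^d$ as a left $\QQ(\zeta_p)$-vector space via this embedding. Its $\QQ$-dimension is $d[E:\QQ]$, at most $2g$ by hypothesis and a positive multiple of $[\QQ(\zeta_p):\QQ] = 2g$, so it equals exactly $2g$. This simultaneously forces $[E:\QQ] = 2g/d$ and $\dim_{\QQ(\zeta_p)} V = 1$.

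For the final step, $E^{\mathrm{op}}$ acts on $V$ by right multiplication, and this action is the full commutant of $M_d(E)$ on $V$; in particular it commutes with $\QQ(\zeta_p) \subseteq M_d(E)$. But since $V$ is one-dimensional over $\QQ(\zeta_p)$, the commutant of $\QQ(\zeta_p)$ inside $\End_{\QQ}(V)$ is just $\QQ(\zeta_p)$ itself. Therefore $E^{\mathrm{op}} \hookrightarrow \QQ(\zeta_p)$; since $\QQ(\zeta_p)$ is commutative, this means $E$ is in fact a field and embeds in $\QQ(\zeta_p)$. The crucial point to package carefully is that the bound $[E:\QQ] \leq 2g/d$ is precisely what makes $V$ one-dimensional over $\QQ(\zeta_p)$, from which both the commutativity of $E$ and the sharp equality $[E:\QQ] = 2g/d$ fall out simultaneously.
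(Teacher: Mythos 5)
Your argument is correct in outline and takes a genuinely different route from the paper's after the common opening. Both proofs begin the same way: Skolem--Noether to place the order-$p$ automorphism in $\PGL_d(E)$, then Lemma \ref{PGL_to_GL} to lift it to an element $M$ of order $p$ in $\GL_d(E)$ (your explicit check that $md \leq \sqrt{2gd} < p$ is a nice way to make the coprimality hypothesis transparent). From there the paper extends scalars to a maximal subfield $F$ via $M_d(E)\otimes F \cong M_{md}(F)$, bounds $|\Gal(\QQ(\zeta_p)/\QQ(\zeta_p)\cap F)|$ by counting eigenvalues, concludes that every maximal subfield of $E$ is the degree-$2g/(md)$ subfield of $\QQ(\zeta_p)$, and then cites the theorem that a noncommutative division algebra has non-isomorphic maximal subfields to force $E=F$. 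You instead run a double-centralizer argument on the simple module $V=E^d$: the dimension bound forces $\dim_{\QQ(\zeta_p)}V=1$, so the commutant of $\QQ(\zeta_p)$ in $\End_{\QQ}(V)$ is $\QQ(\zeta_p)$ itself and absorbs $E^{\mathrm{op}}$. This is arguably cleaner: it avoids both the splitting-field step and the external citation on maximal subfields, and it produces the commutativity of $E$ and the equality $[E:\QQ]=2g/d$ in a single stroke.

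One step needs repair. You claim that because $M\neq 1$, $M^p=1$ and $\Phi_p$ is irreducible over $\QQ$, the minimal polynomial of $M$ is $\Phi_p$. Those facts only show the minimal polynomial divides $(x-1)\Phi_p(x)$ and is not $x-1$; the case $(x-1)\Phi_p(x)$ is not excluded, and in that case $\QQ[M]\cong \QQ\times\QQ(\zeta_p)$ is not a field and $V$ does not become a $\QQ(\zeta_p)$-vector space through $M$. (Compare the block matrix with a $1$ and the companion matrix of $\Phi_p$ on the diagonal, inside $\GL_p(\QQ)$: it satisfies all three of your stated conditions.) The gap closes with the same dimension count you already perform: the decomposition $V=\ker(M-1)\oplus\ker\Phi_p(M)$ is a decomposition into right $E$-submodules, since $M$ commutes with the right $E$-action; if both summands were nonzero, the first would have $\QQ$-dimension at least $[E:\QQ]$ and the second at least $2g$, giving $\dim_{\QQ}V>2g\geq d[E:\QQ]=\dim_{\QQ}V$, a contradiction. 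With that sentence inserted, the proof is complete.
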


\begin{proof}
Let $\sigma \in \Aut_{\QQ}(M_d(E))$ be an element of order $p$.  This element acts trivially on $Z(E)$, the centre of $E$, as $\Aut(Z(E)/\QQ)$ has order less than $p$. Thus $\sigma$ belongs to $\Aut_{Z(E)}(M_d(E))$, which by the Skolem-Noether Theorem, is isomorphic to $ \PGL_d(E)$. Applying Lemma \ref{PGL_to_GL}, we obtain an element of order $p$ in $\GL_d(E)$. 

Let $F$ be a maximal subfield of $E$, then $F$ is a finite extension of $\QQ$ in some fixed algebraic closure $\bar{\QQ}$ of $\QQ$. We have $M_d(E)\otimes F \cong M_{md}(F)$, where $m=[E:F]$, see \cite[Corollary 3.17, page 96]{noncomalg}. Hence we have an element $\bar{\sigma}$ of order $p$ in $\GL_{md}(F)$. Now consider $\QQ(\zeta_p)$ in our fixed algebraic closure $\bar{\QQ}$. The Galois group $\Gal(\QQ(\zeta_p)/\QQ(\zeta_p)\cap F)$ cyclically permutes the eigenvalues of $\bar{\sigma}$, which gives us that $|\Gal(\QQ(\zeta_p)/\QQ(\zeta_p)\cap F)| \leq md$. The inequality
$$2g = [\QQ(\zeta_p):\QQ] \leq md [F\cap \QQ(\zeta_p):\QQ]$$
coupled with
$$m[F:\QQ] = [E:F][F:\QQ] = [E:\QQ] \leq 2g/d$$
 shows $F$ is isomorphic to a field of dimension $2g/md$ over $\QQ$ contained in $\QQ(\zeta_p)$. Furthermore, as $\QQ(\zeta_p)/\QQ$ is a cyclic extension, there is a unique such subfield. In particular all maximal subfields of $E$ are isomorphic. But finite dimensional non-commutative division algebras over $\QQ$ have non-isomorphic maximal subfields \cite[Theorem 5.4]{Schacher}, thus $E=F$ is a field. 
\end{proof}

\begin{theorem}
\label{deg_min_fiel_def_divisible_by_large_prime}
 Suppose $p = 2g+1$ is a prime divisor of $[L:K]$. Then 
 $A$ is isogenous over $\bar{K}$ to the self product of an absolutely simple abelian variety with complex multiplication by a proper subfield of $\QQ(\zeta_p)$.
\end{theorem}

\begin{proof}

 Let $B$ be a simple factor of $A_{\bar{K}}$, so that $A_{\bar{K}} \thicksim B^d \times C$ over $\bar{K}$ and $C$ has no factor isogenous to $B$. In this way, $B^d$ corresponds to a primitive central idempotent $e_B$ of the algebra $\End^0(A)$ satisfying $e_B\End^0(A) = \End^0(B^d)$. In addition, there is some positive integer $N$ such that $(Ne_B)(A) = B^d$.
 
 The Galois group $\Gal(L/K)$ acts on the (finite) set of primitive central idempotents in $\End^0(A)$. By assumption the order of $\Gal(L/K)$ is divisible by $p$, and so contains an element $\sigma$ of order $p$. Either $\sigma$ fixes $e_B$ or the orbit of $e_B$ under the action of $\sigma$ has size $p$. But $A$ has dimension less than $p$, so the latter is not possible, hence  $\sigma$ fixes $e_B$. It follows that $\sigma$ fixes $\End^0(B^d)$. Indeed, for $\varphi \in \End^0(B^d)$, we have $\varphi^\sigma = (e_B \varphi)^\sigma = e_B \varphi^\sigma \in \End^0(B^d)$. This also shows $B^d$ is defined over $L^{\langle \sigma \rangle}$, for $ \sigma(B^d) = \sigma(N e_B A) = N e_B A = B^d$.
 
As $\sigma$ acts non-trivially on $ \End(A)$, we may, after possibly exchanging $B$ for a different simple factor of $A_{\bar{K}}$, assume $\sigma$ acts non-trivially on $ \End(B^d)$. Thus  the cyclic group $\Gal(L/L^{\langle \sigma \rangle})$ of order $p$ acts faithfully on $ \End(B^d)$. Viewing $B^d$ as an abelian variety over $L^{\langle \sigma \rangle}$, we see the minimal extension over which all its endomorphisms are defined has degree divisible by $p$. Applying \cite[Theorem 4.1]{Silverberg}, we obtain the dimension of $B^d$ is at least $g$. Thus we find that $A_{\bar{K}} \thicksim B^d$. Since $\dim (A) = \dim (B^d) = d \dim(B)$, we have $\dim(B) = g/d$.  By \cite[Proposition 4.3]{Silverberg}, the dimension of $\End^0(A)$ over $\QQ$ is at least $p$, in particular $A$ is not absolutely simple, thus $d>1$.

Albert's Classification  \cite[Page 202]{Mumford_ab_vars_book} tells us that $E = \End^0(B)$ has dimension less than or equal to ${2g/d}$ over $\QQ$, so by Lemma \ref{no_element_order_p}, $E$ is isomorphic to a subfield of $\QQ(\zeta_p)$ of degree $2g/d$. Finally, we observe $B$ is a CM abelian variety as $E$ is a field and $[E:\QQ] = 2\dim(B) $. 
\end{proof}

\begin{remark}
The above theorem gives a partial converse to the main theorem of \cite{GK17}. Indeed, there the authors give an upper bound on the powers of primes dividing $[L:K]$ in terms of $g$, and show it is the best possible by exhibiting families of abelian varieties which achieve this bound.
\end{remark}

We shall use the following corollary in our proof of Theorem \ref{deg 5 thm}. We note that it may be deduced from results of \cite{FKRS12} (see also \cite[Definition 2.7]{Lom} for a summary), since they show the endomorphism field of an abelian surface is an extension of the ground field with degree dividing 48. Likewise, another proof of this fact is given in \cite{GK17}. But for the convenience of the reader we provide a direct proof using Theorem \ref{deg_min_fiel_def_divisible_by_large_prime}.

\begin{corollary}
\label{5_does_not_divide_deg_of_extension}
Suppose $g=2$, then $5$ does not divide $[L:K]$.
\end{corollary}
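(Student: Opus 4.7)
The plan is to argue by contradiction using Proposition \ref{deg_min_fiel_def_divisible_by_large_prime} and rule out the only remaining case via the constraints on CM elliptic curves.

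Assume $5 \mid [L:K]$. Since $g=2$, we have $p = 2g+1 = 5$, so Proposition \ref{deg_min_fiel_def_divisible_by_large_prime} applies and yields an absolutely simple abelian variety $B$ of dimension $g/d$ with $d \mid g$ and $d > 1$, such that $A \sim B^d$ over $\bar K$, together with the identification $\End^0(B) = E$ for a subfield $E \subseteq \QQ(\zeta_5)$ of degree $2g/d$.

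The only divisor of $g = 2$ strictly greater than $1$ is $d=2$, so $\dim B = 1$ and $[E:\QQ] = 2$. The unique quadratic subfield of $\QQ(\zeta_5)$ is $\QQ(\sqrt{5})$, which is \emph{real}. Hence $B$ would be an elliptic curve whose endomorphism algebra is the real quadratic field $\QQ(\sqrt{5})$. This contradicts the classification of endomorphism algebras of elliptic curves (an elliptic curve with $\End^0(B) \neq \QQ$ has CM by an imaginary quadratic field).

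The main (and really the only) thing to check is that no other shape of $E$ can arise: the force of Proposition \ref{deg_min_fiel_def_divisible_by_large_prime} is precisely that $E$ is a field of degree $2g/d$, which collapses to the unique real quadratic subfield of $\QQ(\zeta_5)$ when $g=2$. Once this is noted, the incompatibility with the CM theory of elliptic curves finishes the argument.
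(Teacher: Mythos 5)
Your proof is correct and follows essentially the same route as the paper: apply Proposition \ref{deg_min_fiel_def_divisible_by_large_prime} to conclude $A$ is isogenous to the square of an elliptic curve with CM by a proper subfield of $\QQ(\zeta_5)$, and then note that every such subfield is totally real, which is incompatible with an elliptic curve having complex multiplication. The only difference is that you spell out $d=2$ and $E=\QQ(\sqrt{5})$ explicitly, whereas the paper simply observes that all proper subfields of $\QQ(\zeta_5)$ are totally real.
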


\begin{proof}
Suppose $5$ divides $[L:K]$. Then by Theorem \ref{deg_min_fiel_def_divisible_by_large_prime}, $A$ is isogenous to the square of an elliptic curve with CM by a proper subfield of $\QQ(\zeta_5)$. But all proper subfields of $\QQ(\zeta_5)$ are totally real, so we have a contradiction.
\end{proof}

Our next lemma provides us with a useful criterion for the action of $G_K$ on $A[l]$ to be irreducible.

 \begin{lemma}
 \label{faithful_module}
 Let $l$ and $p$ be primes and suppose the multiplicative order of $l$ modulo $p$ is $s>1$. Let  $V$ be a faithful representation of $\ZZ/p\ZZ$  over $\Fl$. Then $V$ is irreducible if and only if $V$ has dimension $s$.
 \end{lemma}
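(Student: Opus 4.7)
The plan is to classify the irreducible $\Fl[\ZZ/p\ZZ]$-modules and then observe that every faithful irreducible has dimension exactly $s$. Since $s > 1$ forces $l \not\equiv 0 \pmod{p}$, we have $l \neq p$, so by Maschke's theorem $\Fl[\ZZ/p\ZZ]$ is semisimple and $V$ decomposes as a direct sum of simple submodules. Moreover, $\Fl[\ZZ/p\ZZ] \cong \Fl[x]/(x^p-1)$, so the simple $\Fl[\ZZ/p\ZZ]$-modules are in bijection with the monic irreducible factors of $x^p - 1$ over $\Fl$.

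Next, I would factor $x^p - 1 = (x-1)\Phi_p(x)$ over $\Fl$. The roots of $\Phi_p$ in $\overline{\Fl}$ are the primitive $p$-th roots of unity, on which the Frobenius $\zeta \mapsto \zeta^l$ acts. The subfield of $\overline{\Fl}$ generated by any primitive $p$-th root of unity is the same field $\Fl(\zeta_p)$, whose degree over $\Fl$ equals the multiplicative order of $l$ modulo $p$, namely $s$. Hence every Galois orbit of primitive $p$-th roots of unity has size exactly $s$, and $\Phi_p$ factors over $\Fl$ into $(p-1)/s$ distinct irreducibles, each of degree $s$. Translating back, the irreducible $\Fl[\ZZ/p\ZZ]$-modules are either the trivial one-dimensional module or one of $(p-1)/s$ isomorphism classes of $s$-dimensional modules on which a generator of $\ZZ/p\ZZ$ acts nontrivially.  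Since $\ZZ/p\ZZ$ has prime order, any nontrivial submodule is automatically faithful.

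Finally, I would put the two directions together. If $V$ is irreducible then faithfulness rules out the trivial module, so the classification forces $\dim V = s$. Conversely, if $\dim V = s$, write $V$ as a direct sum of simples of dimensions $1$ or $s$; faithfulness prevents every summand from being trivial, so at least one summand has dimension $s$, which by dimension count must equal $V$, making $V$ irreducible. I do not anticipate a serious obstacle; the only substantive step is identifying the Frobenius-orbit structure on the primitive $p$-th roots of unity, which is the standard calculation of the splitting behaviour of $\Phi_p$ over $\Fl$.
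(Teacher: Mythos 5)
Your proof is correct, and both arguments ultimately rest on the same key fact: the Frobenius $\zeta\mapsto\zeta^{l}$ acts on the nontrivial $p$-th roots of unity with all orbits of size $s$, so every nontrivial irreducible $\Fl[\ZZ/p\ZZ]$-module has dimension exactly $s$. The paper obtains this by extending scalars to $\FF_{l^s}$, decomposing into linear characters, and invoking the triviality of Schur indices in positive characteristic to descend Galois orbits to irreducibles over $\Fl$; you instead factor $x^p-1=(x-1)\Phi_p(x)$ in $\Fl[x]$ and read off the simple modules of $\Fl[x]/(x^p-1)$ directly. Your route is slightly more self-contained (no citation needed for Schur indices) and makes the ``if'' direction cleaner, since the observation that any nontrivial summand of a $\ZZ/p\ZZ$-module is automatically faithful does all the work; the paper's version generalises more readily to nonabelian groups, but that generality is not needed here.
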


\begin{proof}
Let $0 \subsetneq V' \subseteq V$ be the largest submodule of $V$ not containing a copy of the trivial representation. The smallest extension of $\Fl$ which contains an element of multiplicative order $p$ is $\FF_{l^s}$. It follows that $\FF_{l^s}$ is the smallest extension of $\Fl$ over which all irreducible characteristic $l$ representations of $\ZZ/p\ZZ$ are linear. The $\FF_{l^s}[\ZZ/p\ZZ]$-module $V'\otimes \FF_{l^s}$, being semisimple thanks to Maschke's Theorem, is a sum of irreducible faithful linear $\FF_{l^s}$-representations of $\ZZ/p\ZZ$. As the character attached to $V'$ takes values in $\Fl$, the Galois group $\Gal(\FF_{l^s}/\Fl)$ permutes the irreducible constituents of $V'\otimes \FF_{l^s}$, so $V'$ has dimension $ms$ for some $m$. Furthermore, as Schur indices in positive characteristic are trivial \cite[Theorem 9.14]{Isaacs}, each orbit under the action of $\Gal(\FF_{l^s}/\Fl)$ is an irreducible $\Fl[\ZZ/p\ZZ]$-module. Thus $m$ is equal to the number of irreducible $\Fl[\ZZ/p\ZZ]$-modules in $V'$. It follows that $V$ is irreducible if and only if $V'=V$ and $m=1$, i.e., $V$ has dimension $s$.
\end{proof}

\begin{theorem}
\label{general_l_is_primitive_root}
Let $l$ and $p=2g+1$ be primes with $l$ a primitive root modulo $p$. Suppose $\Gal(K(A[l])/K)$ contains an element of order $p$. Then one of the following holds:
\begin{enumerate}
    \item $E := \End^0(A)$ is a number field, and both
    \begin{enumerate}[a)]
        \item $l$ is totally inert in $E/\QQ$,
        \item the order ${\rm End}(A)$ is $l$-maximal in $E$.
    \end{enumerate}
    \item $\Gal(L/K)$ contains an element of order $p$, and $A$ is isogenous over $\bar{K}$ to the self product of an absolutely simple abelian variety with complex multiplication by a proper subfield of $\QQ(\zeta_p)$.
\end{enumerate}
\end{theorem}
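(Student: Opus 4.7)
The plan is to apply either Proposition~\ref{l-maximal} or Proposition~\ref{deg_min_fiel_def_divisible_by_large_prime} depending on whether $p$ divides $[L:K]$. The crucial input is Lemma~\ref{faithful_module}: since $l$ is a primitive root modulo $p$, its multiplicative order $s$ equals $p-1=2g=\dim_{\Fl} A[l]$, so any cyclic subgroup of order $p$ acting faithfully on $A[l]$ does so irreducibly. In particular, the hypothesised $\sigma \in \Gal(K(A[l])/K)$ of order $p$ acts faithfully on $A[l]$ (its kernel is a proper subgroup of $\langle\sigma\rangle$, hence trivial), so $G_K$ itself already acts irreducibly on $A[l]$. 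Proposition~\ref{l-maximal} then bounds $\End_K^0(A)$, but since the theorem concerns the \emph{absolute} endomorphism algebra, we must pass to an intermediate field containing $L$.

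If $p \mid [L:K]$, then Cauchy's theorem gives an element of order $p$ in $\Gal(L/K)$, and Proposition~\ref{deg_min_fiel_def_divisible_by_large_prime} directly yields conclusion $(2)$.

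If $p \nmid [L:K]$, I would form the compositum $M := K(A[l]) \cdot L$, which is Galois over $K$, and lift $\sigma$ to $\tilde{\sigma} \in \Gal(M/K)$. Because the image of $\tilde{\sigma}$ in $\Gal(L/K)$ has some order $n$ coprime to $p$, the element $\tilde{\sigma}^n$ has exact order $p$ in $\Gal(M/K)$ and lies in $\Gal(M/L)$. Setting $K' := M^{\langle \tilde{\sigma}^n \rangle}$, one then checks that $L \subseteq K'$, that $K'(A[l]) = M$ (otherwise $\tilde{\sigma}^n$ would act trivially on $A[l]$, contradicting its order), and that $\Gal(K'(A[l])/K') = \langle \tilde{\sigma}^n \rangle$ is cyclic of order $p$ acting faithfully on $A[l]$. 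Lemma~\ref{faithful_module} then shows $G_{K'}$ acts irreducibly on $A[l]$, so applying Proposition~\ref{l-maximal} to $A/K'$ gives conclusion $(1)$, using that $\End^0_{K'}(A) = \End^0(A)$ because $L \subseteq K'$.

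The main obstacle is precisely this bridge: we are handed an element of order $p$ in $\Gal(K(A[l])/K)$, but the conclusions $(a)$ and $(b)$ of $(1)$ concern the geometric endomorphism ring, which is only rational over $L$. The compositum construction together with the coprimality $\gcd(p,[L:K])=1$ in the second case is what lets us descend the order-$p$ action on $A[l]$ to a field already containing $L$, making the two propositions from the preceding subsection mutually exhaustive.
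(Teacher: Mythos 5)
Your proof is correct and follows essentially the same route as the paper: split on whether $p$ divides $[L:K]$, apply Proposition~\ref{deg_min_fiel_def_divisible_by_large_prime} in the divisible case, and otherwise descend to a field containing $L$ over which an order-$p$ element still acts on $A[l]$, then invoke Lemma~\ref{faithful_module} and Proposition~\ref{l-maximal}. The paper simply works over $L$ itself (noting that $\Gal(L(A[l])/L)$ has index prime to $p$ in $\Gal(K(A[l])/K)$, hence still contains an element of order $p$), whereas you build the auxiliary field $K'$ explicitly via the compositum --- a purely cosmetic difference.
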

 
 \begin{proof}
 Suppose $p$ does not divide $[L:K]$. Then $\Gal(L(A[l])/L)$ contains an element of order $p$. The action of $\Gal(L(A[l])/L)$ on the $2g=p-1$ dimensional vector space $A[l]$ over $\Fl$ is faithful, and thus irreducible by Lemma \ref{faithful_module}. Applying Proposition \ref{l-maximal} we deduce the first case.
 
 Suppose now $p$ divides $[L:K]$. Then $\Gal(L/K)$ contains an element of order $p=2g+1$, so we can apply Theorem $\ref{deg_min_fiel_def_divisible_by_large_prime}$ to conclude.
 \end{proof}
 
 Theorem \ref{2_torsion_all_thms_put_together_HE} now follows by taking $l=2$ and observing that $\Gal(K(f)/K)= \Gal(K(J_f[2])/K)$, see \cite[Theorem 5.1]{Zar_modreps}.

\begin{theorem}
\label{ab_surface}
Suppose $g=2$, and $\Gal(K(A[2])/K)$ contains an element of order $5$. Then one of the following holds:
\begin{enumerate}
\item ${\rm End}(A) \cong\ZZ$.
\item ${\rm End}(A) \cong\ZZ[\frac{1+ r \sqrt{D}}{2}]$, where $D \equiv 5 \mod{8}$, $D > 0$ is square-free and $2 \nmid r$.
\item ${\rm End}(A) \cong R$, where $R$ is a 2-maximal order in a degree 4 CM field, which is totally inert at 2.
\end{enumerate}
In particular $A$ is absolutely simple and does not have quaternion multiplication over $\bar{K}$.
\end{theorem}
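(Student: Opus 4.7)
The plan is to specialise Theorem \ref{general_l_is_primitive_root} to $l=2$, $p=5$, and then combine its output with Albert's classification. Since $2$ has multiplicative order $4=\varphi(5)$ modulo $5$, it is a primitive root, so the hypothesis of that theorem is satisfied. Its case (2) would force $5\mid [L:K]$, which is ruled out when $g=2$ by Corollary \ref{5_does_not_divide_deg_of_extension}. Hence we land in case (1): $E:=\End^0(A)$ is a number field, $2$ is totally inert in $E/\QQ$, and $\End(A)$ is a $2$-maximal order in $E$.

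Next I would argue that $A$ is absolutely simple. Since $E$ is a field it contains no non-trivial idempotents, so $A$ cannot be isogenous over $\bar K$ to a product of two non-isogenous factors; and it cannot be isogenous to $B^n$ with $n\geq 2$ either, since then $\End^0(A)\cong M_n(\End^0(B))$ would not be a field. Albert's classification \cite[p.~202]{Mumford_ab_vars_book} then leaves four possibilities for $\End^0$ of a simple abelian surface in characteristic zero: $\QQ$, a real quadratic field, an indefinite rational quaternion algebra, or a quartic CM field. Commutativity of $E$ rules out the quaternion case, leaving three.

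It remains to match each of the three surviving possibilities with a case in the statement. $E=\QQ$ forces $\End(A)=\ZZ$, i.e.\ case (1). A quartic CM $E$, together with the inertness of $2$ and the $2$-maximality of $\End(A)$ already obtained, is precisely case (3). The only bookkeeping lies in the real quadratic case $E=\QQ(\sqrt D)$ with $D>0$ squarefree: inertness of $2$ forces $D\equiv 5\pmod 8$ (so that $\OO_E=\ZZ[\tfrac{1+\sqrt D}{2}]$), and the $2$-maximal orders of $E$ are then exactly $\ZZ[\tfrac{1+r\sqrt D}{2}]$ for $r$ odd (conductor $r$, with $(1+r\sqrt D)/2$ integral iff $r$ is odd), which is case (2). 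The ``in particular'' clause is immediate from absolute simplicity of $A$ and commutativity of $E$. The heavy lifting has already been carried out in Theorem \ref{general_l_is_primitive_root} and Corollary \ref{5_does_not_divide_deg_of_extension}, so no real obstacle remains beyond applying Albert's list and a routine conductor calculation.
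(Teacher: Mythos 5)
Your overall route is the same as the paper's: apply Theorem \ref{general_l_is_primitive_root} with $l=2$, $p=5$, kill case (2) via Corollary \ref{5_does_not_divide_deg_of_extension}, and then sort the surviving field $E$ by its degree over $\QQ$, using inertness of $2$ and $2$-maximality of the order exactly as the paper does (your conductor computation in the quadratic case matches the paper's).

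There is one genuine gap, at the step where you claim that Albert's classification ``leaves four possibilities'' for a simple abelian surface: $\QQ$, a real quadratic field, an indefinite rational quaternion algebra, or a quartic CM field. The numerical restrictions on page 202 of \cite{Mumford_ab_vars_book} do \emph{not} exclude Type IV with $e_0=1$, $d=1$, i.e.\ an imaginary quadratic field: the constraint there is $e_0 d^2 \mid g$, which is satisfied for $g=2$. So with only the reference you give, the quadratic case of your argument must allow $D<0$, and you never rule it out --- yet the theorem asserts $D>0$. The paper closes exactly this hole by citing a finer result of Shimura (\S 4 of \cite{Shimura63_albert_paper}): the centre of the endomorphism algebra of an abelian surface cannot contain an imaginary quadratic field. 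You need that (or an equivalent argument, e.g.\ via the analytic/CM-type analysis showing an abelian surface with imaginary quadratic multiplication is non-simple or that such a field cannot be the full endomorphism algebra of a simple surface) to get $D>0$; it does not follow from commutativity of $E$ or from the $2$-adic conditions, since $2$ is inert in $\QQ(\sqrt{D})$ for plenty of negative $D\equiv 5 \pmod 8$. Everything else in your write-up is sound.
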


\begin{proof}
Observe 2 is a primitive root modulo 5, so we may apply Theorem \ref{general_l_is_primitive_root}. Furthermore, by Corollary \ref{5_does_not_divide_deg_of_extension}, we are in the first case of Theorem \ref{general_l_is_primitive_root}. Thus $E = {\rm End}^0(J_f)$ is a field, 2 is totally inert in the extension $E/\QQ$, and ${\rm End}(J_f)$ is a 2-maximal order in $E$.

As $E = {\rm End}^0(J_f)$ is a field, we have $[E:\QQ]$ divides ${ \rm 2dim}(J_f)=4$. If $[E:\QQ]=1$, then we are in case 1. If $[E:\QQ]=4$, then $E$ is a CM field.

If $[E:\QQ]=2$, then we may write $E=\QQ(\sqrt{D})$ for some square-free $D$. Since 2 is totally inert in the extension $E/\QQ$, we find that $D \equiv 5 \mod{8}$. The centre of the endomorphism algebra of an abelian surface cannot contain an imaginary quadratic field (see \S4 of \cite{Shimura63_albert_paper}), so we have $D>0$. Finally as $D \equiv 1 \mod{4}$, and ${\rm End}(J_f)$ is a 2-maximal order, we find ${\rm End}(J_f) \cong\ZZ[\frac{1+ r \sqrt{D}}{2}]$ with $2 \nmid r$.
\end{proof}

It is clear that Theorem \ref{deg 5 thm} follows from Theorem \ref{ab_surface}, since if $f \in K[x]$ is irreducible and has degree 5, then the Jacobian $J_f$ is an abelian surface and $\Gal(K(f)/K) = \Gal(K(J_f[2])/K)$ contains an element of order $5$.

We now start working towards the proof of Theorem \ref{2_torsion_all_thms_put_together_HE_g_odd}.


\begin{lemma}
\label{symplectic}
Let $l$ and $p = 2g+1$ be distinct primes such that $g$ is odd and the multiplicative order of $l$ modulo $p$ is $g$. Let $V$ be a symplectic space of dimension $2g$ over $\FF_l$, and $\rho : \ZZ/p\ZZ \rightarrow V$  a faithful representation  which preserves the symplectic pairing on $V$.

Then there exist irreducible non-isomorphic $\FF_l \langle \sigma \rangle$-submodules $U,W \subseteq V$ of dimension $g$ such that $V = U \oplus W$.
\end{lemma}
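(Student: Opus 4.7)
The plan is to combine Lemma~\ref{faithful_module} with the self-duality of $V$ induced by the symplectic form. By (the proof of) Lemma~\ref{faithful_module}, every non-trivial irreducible $\FF_l\langle\sigma\rangle$-module has dimension $g$, and there are exactly $(p-1)/g = 2$ such modules up to isomorphism, corresponding to the two orbits of multiplication by $l$ on $(\ZZ/p\ZZ)^*$. Call them $M$ and $N$. Since the representation is semisimple, I would write the isotypic decomposition $V = V^{\langle\sigma\rangle} \oplus V(M) \oplus V(N)$ with $V(M)\cong M^{\oplus a}$ and $V(N)\cong N^{\oplus b}$.

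The next step is to identify the dual modules. Over $\overline{\FF_l}$, the irreducible $M$ decomposes as $\bigoplus_{i\in\Omega}\chi_i$, where $\Omega\subset(\ZZ/p\ZZ)^*$ is an $\langle l\rangle$-orbit and $\chi_i(\sigma)=\zeta^i$ for a fixed primitive $p$-th root of unity $\zeta$; then $M^*$ corresponds to the orbit $-\Omega$. Here the hypothesis that $g$ is odd is decisive: $\langle l\rangle$ has odd order $g$ in the cyclic group $(\ZZ/p\ZZ)^*$ of even order $2g$, so it cannot contain the involution $-1$. Hence $-\Omega$ is the other orbit and $M^*\cong N$ (and symmetrically $N^*\cong M$).

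Finally, the non-degenerate $\sigma$-invariant form $\omega$ yields a $\sigma$-equivariant isomorphism $V\to V^*$ via $v\mapsto\omega(v,\cdot)$. Computing the isotypic components of $V^*$ and using $M^*\cong N$ and $N^*\cong M$, uniqueness of isotypic decomposition forces $a=b$. Combined with $\dim V^{\langle\sigma\rangle}+2ga=2g$, the only possibilities are $a\in\{0,1\}$. The case $a=0$ would give $V=V^{\langle\sigma\rangle}$, contradicting faithfulness of $\rho$; hence $a=b=1$ and $V^{\langle\sigma\rangle}=0$, so $V\cong M\oplus N$ with $M\not\cong N$, and taking $U=V(M)$, $W=V(N)$ finishes the proof. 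I expect the duality computation of the middle paragraph to be the main obstacle: without the oddness of $g$ we would have $M^*\cong M$, and nothing would rule out the possibility $V\cong M\oplus M$.
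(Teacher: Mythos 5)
Your argument is correct, and it reaches the conclusion by a genuinely different route from the paper's. The paper works geometrically with the symplectic form: it picks one faithful irreducible submodule $U$ (of dimension $g$ by Lemma~\ref{faithful_module}), observes that $U$ is totally singular so that a Maschke complement $W$ carries the dual action $(M^{T})^{-1}$, and then quotes a lemma asserting that for $g$ and $\lvert\sigma\rvert$ odd the matrices $M$ and $(M^{T})^{-1}$ share no eigenvalues, whence $U\not\cong W$. You instead classify all irreducible $\FF_l[\ZZ/p\ZZ]$-modules first (exactly two non-trivial ones, $M$ and $N$, of dimension $g$, indexed by the two $\langle l\rangle$-orbits on $(\ZZ/p\ZZ)^*$), show $M^*\cong N$ because the odd-order subgroup $\langle l\rangle$ cannot contain $-1$, and use the form only to produce a $\sigma$-equivariant isomorphism $V\cong V^*$ forcing equal multiplicities; faithfulness and a dimension count then pin down $V\cong M\oplus N$. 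The two proofs hinge on the same arithmetic fact --- $-1\notin\langle l\rangle$, which is exactly the disjointness of the eigenvalue sets of $M$ and $(M^T)^{-1}$ --- but yours packages it through duality of characters rather than through the cited structural lemmas on totally singular subspaces. What your version buys: it is self-contained modulo standard semisimple module theory, it actually determines $V$ completely (not merely the existence of one such decomposition), and it uses only that the invariant form is non-degenerate, not that it is alternating. What the paper's version buys is brevity, at the cost of outsourcing three steps to the literature.
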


\begin{proof}
Let $\sigma$ denote a generator of $\ZZ/p\ZZ$. As $V$ is a faithful $\FF_l\langle \sigma \rangle$-module, and $\sigma$ has prime order $p \neq l$, there exists $U$ a faithful irreducible $\FF_{l}\langle \sigma \rangle$-submodule of $V$ which by Lemma \ref{faithful_module} has odd dimension $g$. By Maschke's Theorem we have a decomposition of $ \FF_l\langle \sigma \rangle$-modules $V = U \oplus W$.  As $U$ is irreducible and its dimension $g$ is odd, it follows from \cite[ Lemma 3.1.6]{Burness} that  $U$ is a totally isotropic subspace of $V$ with respect to the symplectic pairing. In order to preserve the pairing, we have that if $\sigma $ acts on $U$ as the matrix $M$, then $\sigma $ acts on $W$ as $(M^T)^{-1}$ (see \cite[Lemma 2.2.17]{Burness}). Thus $\sigma$ acts irreducibly on $U$ and $W$.

It rests to show $U$ and $W$ are not isomorphic as $\FF_l\langle \sigma \rangle$-modules. Indeed, suppose they were, so we have $\varphi:U \rightarrow W$ an isomorphism of $\FF_l\langle \sigma \rangle$-modules. Let $u \in U \otimes \bar{\FF}_l$ be an eigenvector of $M$ with eigenvalue $\mu \in \bar{\FF}_l$. Then by extending $\varphi$ linearly to an $\bar{\FF}_l$ homomorphism, we have 
\begin{equation*}
    \begin{split}
        \mu \varphi(u) & = \varphi(\mu u) \\
& =  \varphi(Mu) \\
&= \varphi(\sigma u ) \\
&= \sigma \varphi(u)\\
&= (M^T)^{-1} \varphi(u).\\
    \end{split}
\end{equation*}
But as $g$ and $|\sigma |$ are odd, $(M^T)^{-1}$ and $M$ have no common eigenvalues  \cite[Lemma 3.1.13]{Burness}. Thus we have a contradiction. Hence $U$ and $W$ are not isomorphic as $\FF_l\langle \sigma \rangle$-modules.
\end{proof}

The following lemma is Proposition 1.3 of \cite{Remond}.

\begin{lemma}
\label{field_def_of_homs}
Let $A$ and $B$ be abelian varieties defined over $K$, and $\varphi: A \rightarrow B$ an isogeny. Suppose $\End^0(A) = \End^0_K(A)$. Then $\varphi$ is defined over an extension of $K$ of (relative) degree dividing the order of the torsion multiplicative subgroup of the centre of $\End^0(A)$.
\end{lemma}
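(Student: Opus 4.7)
The plan is to capture the Galois behaviour of $\varphi$ in a homomorphism $\chi\colon G_K\to\End^{0}(A)^{\times}$ and then argue that its image is forced into the torsion subgroup of the centre of $\End^{0}(A)$.

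First, set $\chi(\sigma):=\varphi^{-1}\varphi^{\sigma}$, which lies in $\End^{0}(A)^{\times}$ because $\varphi$ is an isogeny. The hypothesis $\End^{0}(A)=\End^{0}_{K}(A)$ makes the $G_K$-action on $\End^{0}(A)$ trivial, so $\chi(\tau)^{\sigma}=\chi(\tau)$; combined with $\varphi^{\sigma\tau}=(\varphi^{\tau})^{\sigma}$, a short unpacking gives $\chi(\sigma\tau)=\chi(\sigma)\chi(\tau)^{\sigma}=\chi(\sigma)\chi(\tau)$, so $\chi$ is a continuous group homomorphism. Its kernel equals $\{\sigma\in G_K:\varphi^{\sigma}=\varphi\}=\Gal(\bar K/K_{\varphi})$, where $K_{\varphi}$ denotes the minimal field of definition of $\varphi$; hence $[K_{\varphi}:K]=|H|$ where $H:=\chi(G_K)$.

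The key step is to show $H\subseteq Z(\End^{0}(A))^{\times}$. The transport-of-structure isomorphism $\iota\colon\End^{0}(A)\to\End^{0}(B)$, $\alpha\mapsto\varphi\alpha\varphi^{-1}$, satisfies $\iota(\alpha)^{\sigma}=\iota\bigl(\chi(\sigma)\,\alpha\,\chi(\sigma)^{-1}\bigr)$, so read through $\iota$ the Galois action on $\End^{0}(B)$ is conjugation by $\chi(\sigma)$. Applying the same recipe to $\varphi^{-1}\colon B\to A$ produces a companion homomorphism in $\End^{0}(B)^{\times}$ related to $\chi$ via $\iota$; comparing the two and using that every element of $\End^{0}(A)$ is Galois-invariant forces each $\chi(\sigma)$ to commute with $\End^{0}(A)$. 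This centrality step is where I expect the main difficulty: a priori the image of a homomorphism into $\End^{0}(A)^{\times}$ could be a large non-abelian finite subgroup, so one has to use more than the bare multiplicativity of $\chi$. The extra input is that $\chi$ is built from a geometric isogeny rather than being an arbitrary continuous cocycle, and that fact has to be extracted carefully.

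Once $H\subseteq Z(\End^{0}(A))^{\times}$ is established the conclusion is immediate: $\End^{0}(A)$ is semisimple, so $Z(\End^{0}(A))$ is a finite product of number fields, and every finite subgroup of the unit group of a commutative semisimple $\QQ$-algebra is contained in its torsion subgroup (a finite product of cyclic groups of roots of unity). Thus $H\subseteq Z(\End^{0}(A))^{\times}_{\mathrm{tors}}$, and Lagrange yields $[K_{\varphi}:K]=|H|\mid|Z(\End^{0}(A))^{\times}_{\mathrm{tors}}|$, as required.
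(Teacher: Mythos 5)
Your opening moves are sound and are the natural way in: since $A$ and $B$ are both defined over $K$, the element $\chi(\sigma)=\varphi^{-1}\varphi^{\sigma}$ does lie in $\End^0(A)^{\times}$, the hypothesis $\End^0(A)=\End^0_K(A)$ collapses the cocycle identity $\chi(\sigma\tau)=\chi(\sigma)\chi(\tau)^{\sigma}$ to multiplicativity, and $\ker\chi$ is exactly the stabiliser of $\varphi$, so $[K_{\varphi}:K]=|\chi(G_K)|$. (The paper gives no argument of its own to compare with --- it simply cites R\'emond's Proposition 1.3 --- so the only available comparison is with the statement itself.) The closing observation, that a finite subgroup of the unit group of a commutative semisimple $\QQ$-algebra sits inside its group of roots of unity, is also fine.

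The problem is the centrality step, and it is not a reparable gap: under the stated hypotheses the claim $\chi(G_K)\subseteq Z(\End^0(A))^{\times}$ is false. Your own identity $\iota(\alpha)^{\sigma}=\iota\bigl(\chi(\sigma)\alpha\chi(\sigma)^{-1}\bigr)$ shows that $\chi(\sigma)$ centralises $\End^0(A)$ for every $\sigma$ if and only if every endomorphism of $B$ is defined over $K$ --- an assumption you are not given. Concretely: let $K=\QQ$, let $E/\QQ$ be an elliptic curve without CM, let $A=E\times E$ (so $\End^0(A)=M_2(\QQ)=\End^0_K(A)$), let $B=E_1\times E_2$ where $E_1,E_2$ are the quadratic twists of $E$ by $2$ and by $3$, and let $\varphi=\psi_1\times\psi_2$ be the product of the twisting isomorphisms. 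For $\sigma$ fixing $\sqrt{2}$ and moving $\sqrt{3}$ one finds $\chi(\sigma)=\mathrm{diag}(1,-1)$, which is not central; the image of $\chi$ is the full group of diagonal sign matrices, of order $4$, and $\varphi$ is defined over no field smaller than $\QQ(\sqrt{2},\sqrt{3})$, while the torsion subgroup of $Z(M_2(\QQ))^{\times}=\QQ^{\times}$ has order $2$. So the strategy cannot be completed as written; indeed this example conflicts with the Lemma exactly as displayed above, which means the precise hypotheses of R\'emond's Proposition 1.3 need to be consulted rather than reconstructed. Note that the difficulty vanishes when $\End^0(A)$ is commutative (every finite subgroup of the unit group then consists of central roots of unity), and that in the paper's one application of this Lemma, where $\End^0(A)\cong D_1\times D_2$, the conclusion actually needed ($p\nmid[K_{\varphi}:K]$) already follows from the correct part of your argument: an element of order $p$ in $\chi(G_K)\subseteq D_1^{\times}\times D_2^{\times}$ would embed $\QQ(\zeta_p)$ into $D_1$ or $D_2$, contradicting $\dim_{\QQ}D_i\le 2(g-1)<p-1$.
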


\begin{lemma}
\label{not_prod_2_nfs}
Suppose $g$ is odd and $\End^0(A) = \End^0_K(A)$. Furthermore, suppose $\Gal(K(A[l])/K)$ contains an element $\sigma$ of prime order $p=2g+1$ such that $A[l] = U \oplus W$ as $\Fl \langle \sigma \rangle $-modules, where both $U$ and $W$ are irreducible of dimension  $g$.

Then the endomorphism algebra of $A$ is not isomorphic to the product of two division algebras.
\end{lemma}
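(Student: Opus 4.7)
The plan is to derive a contradiction via the rational $l$-adic Tate module $V_l A := T_l A \otimes_{\ZZ_l} \QQ_l$. Assume $\End^0(A) \cong D_1 \times D_2$ with each $D_i$ a division algebra, and let $e_1, e_2 \in \End^0(A)$ denote the corresponding central orthogonal idempotents. The hypothesis $\End^0(A) = \End^0_K(A)$ makes each $e_i$ fixed by $G_K$, so the decomposition $V_l A = e_1 V_l A \oplus e_2 V_l A$ is $G_K$-equivariant. Cutting out the $K$-defined abelian subvarieties via integer multiples of the $e_i$ gives $A \thicksim A_1 \times A_2$ with $\End^0(A_i) \cong D_i$, hence each $A_i$ simple, $\dim A_1 + \dim A_2 = g$, and $\dim_{\QQ_l} e_i V_l A = 2 \dim A_i$. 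The contradiction will then follow if both summands $e_i V_l A$ are forced to have $\QQ_l$-dimension exactly $g$, since $\dim A_i = g/2$ is incompatible with $g$ odd.

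The central task is to determine the $\sigma$-module structure of $V_l A$. The existence of the faithful irreducible $\FF_l\langle\sigma\rangle$-modules $U, W$ of dimension $g$ forces, via Lemma \ref{faithful_module}, that $l$ has multiplicative order $g$ modulo $p$; hence $\Phi_p(x)$ splits over $\QQ_l$ into exactly two distinct irreducible factors of degree $g$, giving non-isomorphic faithful simple $\QQ_l[\sigma]$-modules $E_1, E_2$ of $\QQ_l$-dimension $g$. Since $p \neq l$, Schur--Zassenhaus lets us lift $\sigma$ to $\tilde{\sigma} \in G_K$ of exact order $p$. Because neither $U$ nor $W$ is trivial, $A[l]$ has no $\sigma$-fixed vectors, and any $\tilde{\sigma}$-fixed subspace of $V_l A$ would intersect $T_l A$ in a $\ZZ_l$-pure sublattice whose reduction modulo $l$ would be a non-trivial fixed subspace of $A[l]$; so $V_l A$ has no trivial $\tilde{\sigma}$-component. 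Decomposing $T_l A$ through the idempotents of the semisimple ring $\ZZ_l[\ZZ/p\ZZ]$ and reducing modulo $l$, one matches the $\sigma$-isotypic multiplicities against the given $A[l] = U \oplus W$ to conclude $V_l A \cong E_1 \oplus E_2$ as $\QQ_l[\tilde{\sigma}]$-modules.

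To finish, observe that since $E_1 \not\cong E_2$ are irreducible, the $\tilde{\sigma}$-stable subspaces of $E_1 \oplus E_2$ are only $0$, $E_1$, $E_2$ and $E_1 \oplus E_2$. The subspaces $e_i V_l A$ are $\tilde{\sigma}$-stable (as $\tilde{\sigma}$ commutes with the $G_K$-fixed $e_i$), nonzero (since $\End^0(A)\otimes \QQ_l \hookrightarrow \End(V_l A)$ is faithful), and direct-sum to $V_l A$; hence $\{e_1 V_l A, e_2 V_l A\} = \{E_1, E_2\}$. Both summands therefore have $\QQ_l$-dimension $g$, giving the desired contradiction $\dim A_i = g/2$.

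I expect the main obstacle to be the integral-to-rational passage in the middle step: choosing the lift $\tilde{\sigma}$ of exact order $p$ and pinning down $V_l A \cong E_1 \oplus E_2$ starting from the decomposition of $A[l]$. The key tool is the semisimplicity of $\ZZ_l[\ZZ/p\ZZ]$ (valid since $p \neq l$), which makes the $\sigma$-isotypic decomposition of the lattice $T_l A$ transparent under reduction modulo $l$ and comparable to $A[l] = U \oplus W$. Once this identification of $V_l A$ is in place, the dimension count is immediate.
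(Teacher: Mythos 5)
Your argument is sound in outline and ends at the same contradiction as the paper ($2\dim A_i=g$ with $g$ odd), but the route through the middle is genuinely different. The paper does not compute the full $\sigma$-module structure of $V_l(A)$: it only shows that every non-zero $G_K$-submodule of $V_l(A)$ has dimension $g$ or $2g$ (by intersecting with $T_l(A)$ and reducing mod $l$ into $\{U,W,A[l]\}$), and then has to work to produce a small $G_K$-submodule from the hypothetical splitting $A\thicksim B\times C$ --- this is where it invokes R\'emond's result (Lemma \ref{field_def_of_homs}) together with a bound on the torsion in the centres of $D_1,D_2$ to get the isogeny defined over $K$ after a base change of degree prime to $p$. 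Your use of the Galois-fixed central idempotents $e_1,e_2\in\End^0_K(A)$ to split $V_l(A)$ directly into non-zero $G_K$-stable pieces $e_iV_l(A)=V_l(A_i)$ of dimension $2\dim A_i$ bypasses R\'emond's lemma entirely, which is a genuine simplification; combined with the paper's ``dimension $g$ or $2g$'' step you would already be done, without ever determining $V_l(A)$ as a module over an order-$p$ element.

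The one step you must repair is the lift itself. There is no $\tilde\sigma\in G_K$ of exact order $p$: by Artin--Schreier, the absolute Galois group of a number field has no torsion of odd order, so Schur--Zassenhaus cannot be applied inside $G_K$. What you actually need, and what is true, is that the image of $G_K$ in $\Aut(T_l(A))$ contains an element of order $p$ reducing to $\sigma$, because the kernel of $\Aut(T_l(A))\to\Aut(A[l])$ is pro-$l$ and $p\neq l$; the profinite lifting argument you have in mind applies to that image, not to $G_K$. With this correction your identification $V_l(A)\cong E_1\oplus E_2$ goes through --- note only that $\ZZ_l[\ZZ/p\ZZ]$ is not semisimple but is a product of $\ZZ_l$ with two unramified discrete valuation rings, which is all your Nakayama/rank computation requires --- and the final dimension count is correct.
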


\begin{proof}
We first show any non-zero $G_K$ submodule of $V_l(A) = T_l(A) \otimes \QQ$ has dimension $g$ or $2g$. Let $M$ be the intersection of $T_l(A)$ with a non-zero $G_K$ submodule of $V_l(A)$. The image $M[l]$ of $M$ in $A[l]$ is a non-zero $G_K$ module, and thus equal to one of $U$, $W$, and $A[l]$. It follows that $M$ has dimension $g$ or $2g$.

Suppose $\End^0(A) \cong D_1 \times D_2$, for some division algebras $D_1$, $D_2$, so $A$ is isogenous to a product of non-isogenous, absolutely simple, abelian varieties $B \times C$. As  $\dim(B \times C) = g$ is odd, one of $B$ and $C$, has dimension less than or equal to $\frac{g-1}{2}$ and the other has dimension at least $\frac{g+1}{2}$. Thus $G_K$ must fix both $B$ and $C$, so $B$ and $C$ are defined over $K$, and in particular $V_l(B \times C)$ contains a non-zero $G_K$ submodule of dimension less than $g$. We shall show this is not possible.

Due to $B$ and $C$ being simple we have the bound $$\dim_\QQ(\End^0(B)), \dim_\QQ(\End^0(C)) \leq 2(g-1)$$ from which we deduce neither the centre of $D_1$ nor the centre of $D_2$ can contain a $p$-th root of unity. Thus by Lemma \ref{field_def_of_homs} we may, after replacing $K$ by a finite extension of degree not divisible by $p$, assume $A$ is isogenous to $B \times C$ over $K$. But this implies there is an isomorphism of $G_K$ modules $V_l(A) \cong V_l(B \times C)$, which is not possible since $V_l(A)$ has no non-zero $G_K$ submodule of dimension less than $g$.
\end{proof}

\begin{theorem}
\label{2_generates_index_2subgroup_for_PPAV}
Suppose $g$ is odd, $p=2g+1$ is prime and the order of $l$ modulo $p$ is $g$. Suppose $A/K$ is principally polarised and $p$ divides the order of $\Gal(K(A[l])/K)$. Then one of the following holds:
\begin{enumerate}
    \item $E = \End^0(A)$ is a number field, and when $\End(A)$ is $l$-maximal, either
    \begin{enumerate}[a)]
        \item $l$ is totally inert in $E/\QQ$, or
        \item \label{twoprimes} there are two distinct primes above $l$ with equal inertia degree in $E$.
    \end{enumerate}
    \item \label{big_case} $\Gal(L/K)$ contains an element of order $p$, and $A$ is isogenous over $\bar{K}$ to the self product of an absolutely simple abelian variety with complex multiplication by a proper subfield of $\QQ(\zeta_p)$.
\end{enumerate}
\end{theorem}

\begin{proof}

If the endomorphism field $L/K$ of $A$ has degree divisible by $p$, then by Theorem \ref{deg_min_fiel_def_divisible_by_large_prime} we are in case \ref{big_case}.

Suppose now the extension $L/K$ does not have degree divisible by $p$.
By enlarging $K$, we may assume $K$ contains $L$, $\Gal(K(A[l])/K)$ is cyclic of order $p$ generated by an element $\sigma$, and in particular $\End^0(A) = \End^0_K(A)$.

As $A$ is principally polarised, the Weil pairing is a symplectic pairing on $A[l]$. Furthermore, $A[l]$ is a faithful $\Fl\langle \sigma \rangle$-module, so we may apply Lemma \ref{symplectic}. Thus $A[l] = U \oplus W$, where $U$ and $W$ are irreducible non-isomorphic $\FF_l\langle \sigma \rangle$-modules of dimension $g$ over $\FF_l$. 
Hence we have the following decomposition $\End(A[l])^{\langle \sigma \rangle} = \End_{\langle \sigma \rangle}(A[l]) = \End_{\langle \sigma \rangle}(U)\oplus \End_{\langle \sigma \rangle}(W)$. By Schur's Lemma, we have that both $\End_{\langle \sigma \rangle}(U)$ and $\End_{\langle \sigma \rangle}(W)$ are division algebras. It follows that $\End(A[l])^{\langle \sigma \rangle}$ contains exactly two non-trivial idempotents.

Let us now consider $\End(A) \otimes \ZZ/l\ZZ $,  a subalgebra of $\End(A[l]) = \End(A[l])^{\langle \sigma \rangle}$. By the above $\End(A) \otimes \ZZ/l\ZZ $, and hence $\End(A)$, contains exactly zero or two non-trivial idempotents. If $\End(A)$ contains no non-trivial  idempotents, then $A$ is absolutely simple. Let us show that this is always the case. Suppose $\End(A)$ contains two non-trivial idempotents. Then $A \thicksim B \times C$ for some absolutely simple abelian varieties $B,C$. But this implies $\End(A)$ is isomorphic to the product of two division algebras, which is not possible by Lemma \ref{not_prod_2_nfs}.

So far we have proven that $E = \End^0(A)$ is a division algebra, and  $\End(A) \otimes \ZZ/l\ZZ $ contains a field of dimension at least $\frac{1}{2}[E:\QQ]$, so by Lemma \ref{alg_is_not_quaternion}, we have $[E:Z(E)] \leq 4$. Thus either $E$ is a number field, or $E$ is a quaternion algebra over $Z(E)$ \cite[Exrecise 31, page 106]{noncomalg}. But the later is not possible by Albert's Classification since $g$ is odd.

Let us now show the conditions on the primes above $l$ when $\End(A)$ is an $l$-maximal order. 
Let $\lambda$ be a prime above $l$ in $E$. Then $A[\lambda]$ is a non-trivial $\Fl \langle \sigma \rangle$-submodule of $A[l] = U \oplus V$. Thus $A[\lambda]$ must intersect at least one of $U$ and $V$ non-trivially, say $U$. By irreducibility of $U$ we have $U \subseteq A[\lambda]  $. If $A[\lambda]$ also intersects $V$ non-trivially, then for the same reason, we find $V \subseteq A[\lambda]$, and so $A[\lambda ]= A[l]$ which implies $l$ is totally inert \cite[\S 5.1, Prop 2, pg 36]{Shimura_Book_Ab_vars_with_CM}. Else, $A[\lambda] = U$ and $l$ is not totally inert. In this second case, let us first note that $l$ is not ramified since, $U$ and $V$, the Jordan-H\"older factors of $A[l]$ are not isomorphic as $\Fl\langle \sigma \rangle$-modules. Let $\lambda' \neq \lambda $ be another prime above $l$. Then $A[\lambda] \cap A[\lambda'] = 0$, and so by the above argument it follows that $A[\lambda'] = V$. Hence $A[l] = A[\lambda] \oplus A[\lambda'] $ and case \ref{twoprimes} is satisfied \cite[\S 5.1, Prop 2, pg 36]{Shimura_Book_Ab_vars_with_CM}.
\end{proof}

Theorem \ref{2_torsion_all_thms_put_together_HE_g_odd} is an immediate consequence of the following.

\begin{theorem}
\label{2_torsion_all_thms_put_together}
Suppose $g$ is odd and $p=2g+1$ is a prime such the index of $\langle 2 \rangle$ in $(\ZZ/p\ZZ)^*$ is at most 2. Suppose $A/K$ is principally polarised and there is an element of order $p$ in $\Gal(K(A[2])/K)$. Then one of the following holds:
\begin{enumerate}
    \item $E = \End^0(A)$ is a number field, and either
    \begin{enumerate}[a)]
        \item $2$ is totally inert in $E/\QQ$, or
        \item \label{not inert_general}there are two distinct primes above $2$ with equal inertia degree in $E$.
    \end{enumerate}
    \item  $\Gal(L/K)$ contains an element of order $p$, and $A$ is isogenous over $\bar{K}$ to the self product of an absolutely simple abelian variety with complex multiplication by a proper subfield of $\QQ(\zeta_p)$.
\end{enumerate}
Furthermore, if $\langle 2 \rangle = (\ZZ/p\ZZ)^*$, then case \ref{not inert_general}) does not occur.
\end{theorem}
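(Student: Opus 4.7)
The plan is to reduce directly to the two theorems already established. The multiplicative order of $2$ modulo $p$ divides $p-1=2g$, and by hypothesis the index of $\langle 2 \rangle$ in $(\ZZ/p\ZZ)^*$ is at most $2$, so the order equals either $2g$ (the primitive root case) or $g$. These are precisely the respective hypotheses of Theorem \ref{general_l_is_primitive_root} and Theorem \ref{2_generates_index_2subgroup_for_PPAV}, both taken with $l=2$, so the proof will split along this dichotomy.

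First I would handle the case where $2$ is a primitive root modulo $p$. Applying Theorem \ref{general_l_is_primitive_root} with $l=2$, we get either $(i)$ $E=\End^0(A)$ is a number field in which $2$ is totally inert and $\End(A)$ is $2$-maximal in $E$, or $(ii)$ $\Gal(L/K)$ contains an element of order $p$ and $A$ is isogenous over $\bar K$ to the self-product of an absolutely simple abelian variety with complex multiplication by a proper subfield of $\QQ(\zeta_p)$. These fall under case $1(a)$ and case $2$ of the present theorem respectively; note in particular that case $1(b)$ does not appear here, which is exactly the final clause of the statement.

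Next I would treat the case where the order of $2$ modulo $p$ equals $g$. Here the principally polarised hypothesis is crucial. Invoking Theorem \ref{2_generates_index_2subgroup_for_PPAV} with $l=2$ -- all of whose hypotheses ($g$ odd, $p=2g+1$ prime, $l=2$ of order $g$ modulo $p$, $A$ principally polarised over $K$, and $p\mid|\Gal(K(A[2])/K)|$) are in force -- yields precisely the three possibilities $1(a)$, $1(b)$, and $2$ listed in the present statement.

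Since the substantive content lives in Theorems \ref{general_l_is_primitive_root} and \ref{2_generates_index_2subgroup_for_PPAV}, this derivation is essentially bookkeeping and I do not expect any genuine obstacle. The only point requiring a moment's care is that the primitive-root branch cannot produce a case-$1(b)$ output, but this is immediate since Theorem \ref{general_l_is_primitive_root} simply has no such alternative in its conclusion.
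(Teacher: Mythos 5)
Your proposal is correct and matches the paper exactly: the paper's proof is the one-line ``Immediate from Theorems \ref{general_l_is_primitive_root} and \ref{2_generates_index_2subgroup_for_PPAV},'' and your case split on whether the order of $2$ modulo $p$ is $2g$ or $g$, together with the observation that the primitive-root branch has no case~1(b) alternative, is precisely the bookkeeping the author left implicit.
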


\begin{proof}
Immediate from Theorems  \ref{general_l_is_primitive_root} and \ref{2_generates_index_2subgroup_for_PPAV}.
\end{proof}

\section{Bounds on the endomorphism field}
In order to prove Theorem \ref{Fields_of_definition_thm}, we shall need some results from the representation theory of finite groups.

Let $G$ be a finite group. Let $X$ be a finite set which $G$ acts on transitively. Suppose $l$ does not divide $|X|=n$.
Let $\Fl[X] $ denote the permutation module associated to this action over $\Fl$. The submodule given by $\langle \sum _{b \in X} b \rangle$ is easily seen to be isomorphic to the trivial module, and owing to the fact that $l \nmid n$, we have a direct sum decomposition of $\Fl[G]$-modules:
$$ \Fl[X] \cong  \Fl[X]^0 \oplus \Fl.$$

\begin{lemma}
\label{reptheory_bound_on_dim}
  Let $b \in X$ and suppose $G_b$  the stabiliser of $b$ has $s$ orbits on $X\setminus \{b\}$. Then $\dim_{\Fl}\End_{G}(\Fl[X]^0) \leq s$.
\end{lemma}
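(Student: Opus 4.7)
The plan is to reduce the bound to the classical rank computation for permutation modules. First I would recall the standard identification
\[ \End_G(\Fl[X]) \;\cong\; \{\, a \colon X\times X\to \Fl \;:\; a(gx,gy)=a(x,y)\ \forall g\in G\,\}, \]
where an equivariant endomorphism is read off as the matrix of its coefficients with respect to the basis $X$. Hence $\dim_{\Fl}\End_G(\Fl[X])$ equals the number of $G$-orbits on $X\times X$. Since $G$ acts transitively on $X$, every such orbit has a representative of the form $(b,y)$, and two pairs $(b,y_1)$ and $(b,y_2)$ lie in the same $G$-orbit if and only if $y_1$ and $y_2$ lie in the same $G_b$-orbit. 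By assumption the $G_b$-orbits on $X$ are $\{b\}$ together with $s$ further orbits, so
\[ \dim_{\Fl}\End_G(\Fl[X]) \;=\; s+1. \]

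Now I would exploit the hypothesis $l\nmid n$, which gives the direct sum decomposition $\Fl[X] = \Fl[X]^0 \oplus \Fl$ quoted just above the lemma. Splitting the endomorphism algebra accordingly,
\[ \End_G(\Fl[X]) \;=\; \End_G(\Fl[X]^0)\ \oplus\ \Hom_G(\Fl[X]^0,\Fl)\ \oplus\ \Hom_G(\Fl,\Fl[X]^0)\ \oplus\ \End_G(\Fl). \]
Each summand has non-negative dimension, and $\End_G(\Fl)=\Fl$ is one-dimensional, so
\[ \dim_{\Fl}\End_G(\Fl[X]^0) \;\leq\; \dim_{\Fl}\End_G(\Fl[X]) - 1 \;=\; s, \]
which is the required inequality.

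There is no real obstacle here; the only point to watch is that the orbit-counting identity for $\End_G$ of a permutation module holds over any field (it is a linear statement about $G$-invariants on $X\times X$), so no character theory or semisimplicity is needed at that stage. The hypothesis $l\nmid n$ is used only to justify peeling off the trivial summand $\Fl$ as a direct complement of $\Fl[X]^0$; without it the short exact sequence $0\to \Fl[X]^0\to \Fl[X]\to \Fl\to 0$ might fail to split and the argument would not yield the constant $s$.
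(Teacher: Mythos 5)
Your proof is correct and takes essentially the same route as the paper: the paper simply cites Lemma 7.1 of Passman for the identity $\dim_{\Fl}\End_{G}(\Fl[X]) = s+1$ (which you prove directly by counting $G$-orbits on $X\times X$) and then subtracts $1$ using the splitting $\Fl[X]\cong\Fl[X]^0\oplus\Fl$, exactly as you do.
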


\begin{proof}
By Lemma 7.1 of \cite{Passman_Permutation_groups}, we have that $\dim_{\Fl}\End_{G}(\Fl[X]) = s+1$. Now due to the isomorphism of $\Fl[G]$-modules, $$ \Fl[X] \cong  \Fl[X]^0 \oplus \Fl$$  we have   $\dim_{\Fl}\End_{G}(\Fl[X]^0) \leq s$.
\end{proof}

In the following we shall denote the roots of a polynomial $f$ by $R_f$, and if $f$ is square-free, then we denote by $C_{f}$ a smooth projective model of the smooth affine curve $y^2 = f(x)$, and its Jacobian by $J_{f}$.

The following result may be deduced from \cite[Proposition 3]{Mori_II}, but we include a proof here for the convenience of the reader. 

\begin{proposition}
\label{bound_on_dim_over_K}
Let  $f \in K[x]$ be a polynomial of odd degree $n$ such that $G = {\rm Gal}(K(f)/K)$ acts transitively on $R_f$. Suppose $\alpha \in R_f$ and its stabiliser $G_\alpha$ has $s$ orbits on $R_f \setminus \{\alpha\}$. 

Then $\dim_ {\QQ} \left(\End_{K}^0(J_{f}) \right) \leq s$. In particular, if ${\rm Gal}(K(f)/K)$ acts doubly transitively on the roots of $f$, then $\End_{K}(J_{f}) \cong \ZZ$.
\end{proposition}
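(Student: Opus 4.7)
The plan is to take $\ell=2$ and pass from $\mathrm{End}_K^0(J_f)$ to the mod $2$ endomorphism ring of the Galois module $J_f[2]$, where the representation theory set up in Lemma \ref{reptheory_bound_on_dim} can be applied directly. The crucial input is the classical identification, due to Mumford and used extensively by Zarhin (cf.\ \cite[Theorem 5.1]{Zar_modreps}), that for odd degree $f$ one has an isomorphism of $G_K$-modules $J_f[2]\cong \FF_2[R_f]^0$, where $G_K$ acts on $R_f$ through its quotient $G=\Gal(K(f)/K)$.

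Given this, the first step is to establish the chain of inequalities
\[
\dim_\QQ \End^0_K(J_f) \;\leq\; \dim_{\FF_2}\End_{G_K}(J_f[2]) \;\leq\; s.
\]
For the left inequality, the standard argument is to consider the Tate module $T_2(J_f)$ and the subring $R:=\End_{\ZZ_2[G_K]}(T_2(J_f))$ of $\End_{\ZZ_2}(T_2(J_f))$. Since $R$ is $\ZZ_2$-free (a submodule of a free $\ZZ_2$-module), and the inclusion $\End_K(J_f)\otimes \ZZ_2\hookrightarrow R$ is the usual one, one has $\mathrm{rank}_\ZZ \End_K(J_f)\leq \mathrm{rank}_{\ZZ_2} R=\dim_{\FF_2} R\otimes \FF_2$; and the reduction map $R\otimes\FF_2\to \End_{G_K}(J_f[2])$ is injective because any $G_K$-equivariant $f\in \ell R$ is $\ell$ times a $G_K$-equivariant map. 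For the right inequality, the isomorphism $J_f[2]\cong \FF_2[R_f]^0$ means $\End_{G_K}(J_f[2])=\End_G(\FF_2[R_f]^0)$, and Lemma \ref{reptheory_bound_on_dim} applies since $2\nmid n$ (as $n$ is odd) and $G$ acts transitively on $R_f$ with $s$ orbits on $R_f\setminus\{\alpha\}$ under the stabiliser.

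The ``in particular'' statement then drops out immediately: when $G$ acts doubly transitively on $R_f$, the stabiliser $G_\alpha$ acts transitively on the $n-1$ remaining roots, so $s=1$. The bound forces $\dim_\QQ \End_K^0(J_f)\leq 1$, hence $\End_K^0(J_f)=\QQ$ and $\End_K(J_f)$ is an order in $\QQ$, necessarily $\ZZ$.

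The only genuinely non-routine step is the identification $J_f[2]\cong \FF_2[R_f]^0$, but this is a well-known result from the theory of hyperelliptic curves of odd degree which is cited and used elsewhere in the paper. Everything else is a straightforward combination of the Tate module formalism with the representation-theoretic Lemma \ref{reptheory_bound_on_dim} proved just above.
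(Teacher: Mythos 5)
Your proposal is correct and follows essentially the same route as the paper: identify $J_f[2]$ with $\FF_2[R_f]^0$ via \cite[Theorem 5.1]{Zar_modreps}, bound $\dim_{\FF_2}\End_{G_K}(J_f[2])$ by $s$ using Lemma \ref{reptheory_bound_on_dim}, and transfer this bound to $\End_K^0(J_f)$ via the injection $\End_K(J_f)\otimes\ZZ/2\ZZ\hookrightarrow\End_{G_K}(J_f[2])$ together with $\ZZ$-freeness of $\End_K(J_f)$. The only (cosmetic) difference is that you route the last step through $\End_{\ZZ_2[G_K]}(T_2(J_f))$, whereas the paper states the mod-$2$ injection for $\End_K(J_f)$ directly.
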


\begin{proof}
First, let us note $|R_f|=n$. As ${\rm Gal}(K(f)/K)$ acts  transitively on $R_f$ and $G_\alpha$ has $s$ orbits on $R_f\setminus \{\alpha\}$, with $n$ odd, we may apply Theorem 5.1 of \cite{Zar_modreps} and Lemma \ref{reptheory_bound_on_dim} to find that $\dim_{\FF_2} \left(\End_{G_K}(J_f[2])\right)\leq s$. Observe
$$\FF_2 \subseteq \End_{K}(J_{f})\otimes \ZZ/2\ZZ \subseteq \End_{G_K}(J[2]).$$
 It follows that $\dim_{\FF_2} \left(\End_{K}(J_{f})\otimes \ZZ/2\ZZ\right)\leq s$. Thus as $\End_{K}(J_{f})$ is a free $\ZZ$-module, we deduce $\dim_ {\QQ}  \left(\End_{K}^0(J_{f}) \right)\leq s$.
\end{proof}

Before proving our next result, we shall recall some standard facts on Frobenius groups and make a few remarks on the endomorphism field of an abelian variety.

A finite group $G$ is said to be a Frobenius group if $G$ contains a subgroup $H$ called the Frobenius complement such that
$$H \cap gHg^{-1} = \{1\} \text{ for all } g \in G\setminus H.$$
Frobenius proved that for such groups there exists a normal subgroup $N$ of $G$ such that $G \cong N \rtimes H$. We call $N$ the Frobenius kernel.
Left multiplication by elements of $G$ on $X$, the set of left cosets of $H$, gives rise to a transitive action and the above condition tells us the identity is the only element which fixes two or more elements of $X$. Hence, this action is at most doubly transitive and the stabiliser $G_b$ of a point $b \in X$ has no fixed points in $X\setminus \{b\}$. Applying Burnside's Lemma to $G_b$ and $X\setminus \{b\}$ (or $X$) shows this action is doubly transitive if and only if $|H| = |N| - 1 $, that is, $|G| = |N|^2 - |N|$. In this case, it can be shown that $N$ is a $p$-elementary group \cite[Propositon 8.4]{Passman_Permutation_groups}, and hence $N$ may be regarded as an $\Fp$ vector space and $G$ as a subgroup of the affine general linear group on this vector space.

Examples of Frobenius groups include  $F_q = \Fq \rtimes \Fq^\times \cong {\rm AGL}(1,q)$, $D_n$ for $n$ odd, and $C_{11}^2 \rtimes {\rm SL}_2(5)$, with Frobenius complements $\Fq^\times$, a subgroup of order 2  and ${\rm SL}_2(5)$ respectively.

\begin{lemma}
\label{Number_of_orbits_of_subgroup}
Let $G = N \rtimes H $ be a Frobenius group with complement $H$ and order $|N|^2 - |N|$. Suppose $U$ is a subgroup of $G$ with index $d$ dividing $|N|-1$.

Then $U$ acts transitively on $X$, the set of left cosets of $H$, and a point stabiliser $U_b$ has exactly $d$ orbits on $X\setminus \{ b\}$. 
\end{lemma}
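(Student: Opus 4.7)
The plan is first to show that the Frobenius kernel $K$ sits inside $U$, and then to read off both transitivity and the orbit count from the standard regular actions of $K$ on $X$ and of $H$ on $X \setminus \{b\}$. The key numerical observation is that $|H| = |K|-1$ is coprime to $|K|$, so $|G| = |K| \cdot [G:K]$ is a coprime factorization. For $U$ I would write $|U| = |U \cap K| \cdot |UK/K|$, where the first factor divides $|K|$, the second divides $[G:K] = |K|-1$, and consequently the two are coprime. On the other hand, $|U| = |K|(|K|-1)/d$ also factors as the coprime product $|K| \cdot \bigl((|K|-1)/d\bigr)$ with the same divisibility properties, so by the uniqueness of such a decomposition I deduce $|U \cap K| = |K|$, i.e.\ $K \leq U$, and $[U:K] = (|K|-1)/d$.

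Next I would invoke the fact that in this setting $K$ acts regularly on $X$: the Frobenius kernel is always semi-regular, and $|K| = |X|$ upgrades this to regularity. Since $K \leq U$, the subgroup $U$ is transitive on $X$, and by orbit-stabilizer $|U_b| = |U|/|X| = (|K|-1)/d$. Choosing $b$ so that $G_b = H$, this says $U_b = U \cap H$ is a subgroup of $H$ of index exactly $d$.

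For the orbit count on $X \setminus \{b\}$, the Frobenius condition $H \cap gHg^{-1} = 1$ for $g \notin H$ shows that $H$ acts freely on $X \setminus \{b\}$, and by cardinality this action is regular. Under the identification of $X \setminus \{b\}$ with $H$ as a left $H$-set, the action of $U_b \leq H$ becomes left multiplication on $H$, whose orbits are precisely the right cosets of $U_b$ in $H$; there are $[H:U_b] = d$ such cosets. The only step demanding any care is the first one, where one must justify the coprime decomposition argument that forces $K \leq U$; once that is in place, everything else is a direct transfer of the regularity of $K$ on $X$ and of $H$ on $X \setminus \{b\}$ to the subgroups $U$ and $U_b$.
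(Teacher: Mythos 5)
Your proof is correct, and it takes a somewhat more structural route than the paper's. The paper gets transitivity from the assertion $UH=G$ and then applies Burnside's lemma, using the observation that non-identity elements of $U_b$ fix no point of $X\setminus\{b\}$, to compute the number of orbits as $\frac{1}{|U_b|}\sum_{g\in U_b}|\mathrm{Fix}(g)|=\frac{d}{|K|-1}(|K|-1)=d$. You instead first prove $K\leq U$ via the coprime factorisation $|U|=|U\cap K|\cdot|UK/K|$ (which is valid: the two factors are supported on the disjoint prime sets of $|K|$ and $|K|-1$, so the decomposition is forced to be $|K|\cdot\bigl((|K|-1)/d\bigr)$), deduce transitivity from the regularity of the Frobenius kernel on $X$, and then identify $X\setminus\{b\}$ with $H$ as a regular left $H$-set so that the orbits of $U_b=U\cap H$ are its $[H:U_b]=d$ cosets. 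The combinatorial core is the same in both arguments --- $U_b$ acts freely on $X\setminus\{b\}$, hence there are $|X\setminus\{b\}|/|U_b|=d$ orbits --- but your version has the merit of actually justifying transitivity (the paper leaves ``$UH=G$'' to the reader), at the price of invoking the standard facts that $|K|=[G:H]$ and that the kernel acts regularly on $X$.
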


\begin{proof}
As $UH = G$, we have $U$ acts transitively on $X$. For the second statement, observe that the non-identity elements of $U_b$ act fixed point freely on $X\setminus \{b \}$. Thus applying Burnside's Lemma, we find the number of orbits of $U_b$ on $X\setminus \{ b \}$ is equal to

$$ \frac{1}{|U_b|}\sum_{g \in U_b} |{\rm Fix}(g)| = \frac{d}{|N|-1} (|N|-1) = d.$$
\end{proof}

\begin{lemma}
\label{Clifford_gives_prime_splitting}
Let $G = N \rtimes H $ be a Frobenius group of order $|N|^2 - |N|$ with Frobenius kernel $N$ of odd order. Suppose $U$ is a subgroup of $G$ with index  dividing $|N|-1$.

Then the restriction of $\bar{\FF}_2[G/H]^0$ to $U$ is semisimple with non-isomorphic simple constituents.
\end{lemma}

\begin{proof}
As $G$ acts 2-transitively on the cosets $G/H$, the module $\CC[G/H]^0$ is irreducible by \cite[Cor. 5.17]{Isaacs} and has dimension $n-1$. It is thus irreducible and of 2-defect zero, so its associated module $\bar{\FF}_2[G/H]^0$ in characteristic 2 is also irreducible \cite[Thm 15.29]{Isaacs}. Furthermore, as the Frobenius kernel $N$ acts non-trivially on $\CC[G/H]^0$, we have by \cite[Thm 6.34]{Isaacs} that the restriction of $\CC[G/H]^0$ to $N$ is isomorphic to the direct sum of $n-1$ distinct non-trivial linear representations. Since $2$ does not divide the order of $N$, this also holds for the restriction of $\bar{\FF}_2[G/H]^0$ to $N$. Putting this together with the fact that $U$ is a normal subgroup, we see the restriction of $\bar{\FF}_2[G/H]^0$ to $U$ is semisimple with non-isomorphic irreducible consituents, by Clifford's Theorem \cite[Thm 6.5]{Isaacs}.
\end{proof}

Recall that we have an injection of abstract groups
$$ {\rm Gal}(L/K) \hookrightarrow {\rm Aut}\left(\End(J_f)\right)$$
where $L/K$ is the minimal extension over which all endomorphisms of $J_f$ are defined. In particular if $E = \End^0(J_f)$ is a number field, then $ {\rm Gal}(L/K) \hookrightarrow {\rm Aut}(E)$ and so $|{\rm Gal}(L/K)|$ divides $[E:\QQ]$. Recall also that if  $E = \End^0(J_f)$ is a number field then $[E:\QQ]$ divides $2\dim(J_f)$.

\begin{theorem}
\label{Field_def_Frob}
 Let $f \in K[x]$ be a polynomial of odd degree $n$ with Galois group isomorphic to a Frobenius group $G$ of order $n(n-1)$.

Suppose $E= {\rm End}^0(J_f)$ is a number field of dimension $s$ over $\QQ$. Then $E/\QQ$ is Galois with ${\rm Gal}(E/\QQ) $ isomorphic to a quotient of $H$ the Frobenius complement of $G$. 

Furthermore, $L/K$ is an extension of degree $s$ contained in $K(f)$, and as abstract groups, ${\rm Gal} (L/K) \cong  {\rm Gal}(E/\QQ)$. Moreover, if $s=n-1$, then $L=EK$.

Finally, if $\End(J_f)$ is $2$-maximal, then $E$ is unramified at 2.
\end{theorem}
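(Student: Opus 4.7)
\emph{Step 1 (Galois structure of $E$).} My plan is first to establish that $E/\QQ$ is Galois of degree $s$ and that $T := \Gal(L/K) \cong \Gal(E/\QQ)$. Since $G$ is doubly transitive on $R_f$, a point stabiliser $G_\alpha$ has exactly one orbit on $R_f \setminus \{\alpha\}$, so Proposition~\ref{bound_on_dim_over_K} gives $\End_K^0(J_f) = \QQ$. Combining this with the canonical embedding $T \hookrightarrow \Aut(E)$, whose fixed subfield in $E$ is $E^T = \End_K^0(J_f) = \QQ$, Artin's theorem on invariants of a finite group of field automorphisms yields that $E/\QQ$ is Galois with group $T$. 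In particular $[L:K] = [E:\QQ] = s$ and the abstract isomorphism $\Gal(L/K) \cong \Gal(E/\QQ)$ follows.

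\emph{Step 2 ($L \subseteq K(f)$).} Since $E$ is a field, $J_f$ is absolutely simple, and Albert's classification forces $s \mid 2 \dim J_f = n-1$, so $\gcd(s, n) = 1$. Writing $\mathcal{O} := \End(J_f)$, the Galois-equivariant injection $\mathcal{O}/2\mathcal{O} \hookrightarrow \End(J_f[2])$, combined with the fact that $\Gal(\bar K / K(f))$ acts trivially on $J_f[2]$ (and hence on $\End(J_f[2])$ by conjugation), shows that the image $\Gal(L / L \cap K(f)) \leq T$ acts trivially on $\mathcal{O}/2\mathcal{O}$. To conclude $L \subseteq K(f)$ I would then show that the $T$-action on $\mathcal{O}/2\mathcal{O}$ is faithful; this reduces to showing $2$ is totally inert in $E/\QQ$, in which case $\mathcal{O}/2\mathcal{O} \cong \Ffield{2}{s}$ and the $T$-action is the cyclic Galois action of $\Gal(\Ffield{2}{s}/\FF_2)$, which is manifestly faithful.

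\emph{Step 3 (quotient of $H$).} Once $L \subseteq K(f)$ is established, $\Gal(L/K) \cong G / \Gal(K(f)/L)$ is a quotient of $G$. Since $s \leq n-1$, the normal subgroup $\Gal(K(f)/L)$ is nontrivial. In the sharply $2$-transitive Frobenius group $G = N \rtimes H$, the complement $H$ acts transitively on $N \setminus \{0\}$, so $N$ is elementary abelian and irreducible under $H$; consequently $N$ is the unique minimal nontrivial normal subgroup of $G$. Therefore $N \leq \Gal(K(f)/L)$, and $\Gal(L/K)$ factors as a quotient of $G/N \cong H$.

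\emph{The main obstacle} is the faithfulness of the $T$-action on $\mathcal{O}/2\mathcal{O}$ in Step 2, which can fail precisely when $2$ ramifies in $E$. I would attack this via a variant of Proposition~\ref{l-maximal}: after passing to a suitable intermediate base field at which the Galois image acting on $J_f[2]$ retains the irreducibility inherited from the double transitivity of $G$, one should be able to conclude that $2$ is totally inert in $E/\QQ$ and that $\mathcal{O}$ is $2$-maximal, completing the argument.
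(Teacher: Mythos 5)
Your Step 1 is correct and is in fact a cleaner route to the first assertion than the paper's: the paper only uses the divisibility $|\Gal(L/K)| \mid s$ at the outset and recovers $[L:K]=s$ and the Galois property of $E/\QQ$ at the very end, whereas your combination of $\End^0_K(J_f)=\QQ$ (from double transitivity and Proposition~\ref{bound_on_dim_over_K}) with Artin's theorem gives $[L:K]=s$ and $\Gal(L/K)\cong\Gal(E/\QQ)$ immediately. Step 3 is also fine.

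The genuine gap is in Step 2, and it is not fillable along the lines you propose. Your argument for $L\subseteq K(f)$ hinges on showing that $2$ is totally inert in $E/\QQ$, so that $\mathcal{O}/2\mathcal{O}$ is a field on which $T$ acts faithfully. But this is false in the generality of the theorem: in the paper's own Example following Corollary~\ref{Field_def_AGL(1,q)} one has $\Gal(f)\cong F_{17}$ and $E=\QQ(\zeta_{17})^{+}$ of degree $8$, where $2$ has multiplicative order $8$ modulo $17$ and hence splits in $E$ into two primes of inertia degree $4$ (this is exactly case (1b) of Theorem~\ref{2_torsion_all_thms_put_together}). The irreducibility you hope to inherit also fails there: over $L$ the image of Galois on $J_f[2]$ is only $\Gal(K(f)/L)\cong D_{17}$ acting on a $16$-dimensional space, which decomposes as a sum of two $8$-dimensional submodules, so no variant of Proposition~\ref{l-maximal} applies. (Faithfulness of $T$ on $\mathcal{O}/2\mathcal{O}$ actually only requires the intersection of the inertia groups at $2$ to be trivial, but you would still have to rule out ramification at $2$, and nothing in your argument does so.)

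The paper avoids this entirely and proves $L\subseteq K(f)$ by a purely group-theoretic orbit count: setting $F'=L\cap K(f)$ of degree $d$ over $K$, the subgroup $\Gal(K(f)/F')\cong\Gal(L(f)/L)$ of the sharply $2$-transitive Frobenius group has index $d$, so by Burnside's lemma (Lemma~\ref{Number_of_orbits_of_subgroup}) its point stabiliser has exactly $d$ orbits on $R_f\setminus\{\alpha\}$; Proposition~\ref{bound_on_dim_over_K} applied over $L$ then gives $s=\dim_\QQ\End^0_L(J_f)\le d$, forcing $d=s$ and hence $L=F'\subseteq K(f)$. You should replace Step 2 with this counting argument; your Steps 1 and 3 can then be kept as stated.
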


\begin{proof}

By the above, we have that $|{\rm Gal}(L/K)|$ divides $ s$. Thus $[L:K]\leq s$ and $L\cap K(f) \subseteq F$ for some $F$, where $F/K$ is an extension of degree $s$ contained in $K(f)$.

Let $F' = L\cap K(f) $. By the above $F'/K$ an extension contained in $F$ of degree $d$ for some $d \in \NN$ dividing $s$. 
Since $E =\End^0(J_f)$ is a field, $J_f$ is absolutely simple and so $s$ divides $n-1 = 2 \dim(J_f)$. We may apply Lemma \ref{Number_of_orbits_of_subgroup}, to find the group ${\rm Gal}(K(f)/F') \cong {\rm Gal}(L(f)/L)$ acts transitively on $R_f$, and the stabiliser of a point $\alpha \in R_f$, has $d$ many orbits on $R_f\setminus \{ \alpha\}$. Thus, by Proposition \ref{bound_on_dim_over_K} we have that
$$s = \dim_{\QQ}\End^0(J_f) = \dim_{\QQ}\End^0_{L}(J_f) \leq d.$$
Hence $d=s$, and $L\cap K(f) = F$. But $[F:K]= s$ and $[L:K]\leq s$, so we find that $L=F$.

Since $|{\rm Aut}(E)| \leq s$ the injection of abstract groups ${\rm Gal}(L/K) \hookrightarrow {\rm Aut}(E)$ from above is an isomorphism. Whence we deduce that $|{\rm Aut}(E)|=s=[E:\QQ]$ and so $E/\QQ$ is Galois. We now conclude ${\rm Gal}(L/K) = {\rm Gal}(F/K)$ is isomorphic to a quotient of the Frobenius complement of $G$.

If $s=n-1$, then $J_f$ is a CM abelian variety. Furthermore, as $E/\QQ$ is Galois the reflex field $E^*$ is a subfield of $E$ \cite[Prop. 28, pg62]{Shimura_Book_Ab_vars_with_CM}. Since  $J_f$ is absolutely simple its CM type is primitive.  Proposition 30 on page 65 of \cite{Shimura_Book_Ab_vars_with_CM} applies to tell us the endomorphism field $L$ equals $E^*K$. In particular, $[E^*:\QQ] \geq s = [E:\QQ]$. It follows $E^* = E$ and $L= EK$.

It remains to show $E/\QQ$ is unramified at 2 when $\End(J_f)$ is 2-maximal. To this end, note that by Lemma \ref{Clifford_gives_prime_splitting} above and Theorem 5.1 of \cite{Zar_modreps} we have $J_f[2] \otimes \bar{\FF}_2$ is semisimple as a $\Gal(K(f)/L)$-module, with no repeated simple constituents. Thus $J_f[2]$ also has no repeated simple constituents as a $\Gal(K(f)/L)$-module. The result now follows since if $2$ ramified in $E$, then $J_f[2]$ would have repeated factors. 
\end{proof}

\begin{corollary}
\label{Field_def_AGL(1,q)}
Let $q$ be an odd prime power. Let $f \in K[x]$ be a polynomial of degree $q$ with Galois group $F_q = \Fq \rtimes \Fq^\times \cong {\rm AGL}(1,q)$.

Suppose $E= {\rm End}^0(J_f)$ is a number field of dimension $s$ over $\QQ$. Then $E/\QQ$ is cyclic Galois and $L/K$ is the unique extension of degree $s$ contained in $K(f)$.
If furthermore, $s=q-1$, then $L = EK$.

\end{corollary}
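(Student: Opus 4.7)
The plan is to apply Theorem \ref{Field_def_Frob} directly, with $G = F_q$ and Frobenius complement $H = \Fq^\times$. The group $F_q$ has order $q(q-1)$, which matches the hypothesis of the theorem with $n = q$. Thus we obtain immediately that $E/\QQ$ is Galois with Galois group a quotient of $\Fq^\times$. Since $\Fq^\times$ is cyclic of order $q-1$, every quotient is cyclic, so $\Gal(E/\QQ) \cong \ZZ/s\ZZ$. The theorem also provides that $L$ is an extension of $K$ of degree $s$ contained in $K(f)$, so only uniqueness remains.

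To establish uniqueness, I would translate via Galois theory: degree-$s$ subextensions of $K(f)/K$ correspond to index-$s$ subgroups $U$ of $\Gal(K(f)/K) \cong F_q$. The key step is to show that any such $U$ must contain the Frobenius kernel $\Fq$. Since $s$ divides $q-1$, we have $q \mid |U| = q(q-1)/s$, so by Cauchy's theorem $U$ contains an element of order $q$. In $F_q$ the Frobenius kernel $\Fq$ is the unique Sylow $q$-subgroup (it is normal, of order $q$ coprime to $q-1$), so the element of order $q$ lies in $\Fq$ and in fact $\Fq \subseteq U$. Then $U/\Fq$ is a subgroup of the cyclic group $F_q/\Fq \cong \Fq^\times$ of index $s$; since cyclic groups have exactly one subgroup of each index dividing the order, $U$ is unique, and therefore so is its fixed field.

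The corollary is essentially bookkeeping on top of Theorem \ref{Field_def_Frob}, so no genuine obstacle arises: the only non-formal point is the Sylow-plus-cyclicity argument pinning down the index-$s$ subgroup of $F_q$, which is elementary group theory.
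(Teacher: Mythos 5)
Your proposal follows the paper's route: the paper's entire proof is the one-line observation that the corollary follows from Theorem \ref{Field_def_Frob} because $F_q$ is a $2$-transitive Frobenius group with cyclic Frobenius complement, and you are simply spelling out the group theory behind the uniqueness claim that the paper leaves implicit. One small repair is needed in your uniqueness step: when $q$ is a proper prime power $p^k$, Cauchy's theorem does not produce an element of order $q$, and in fact $F_q$ has no element of order $q$ at all, since the Frobenius kernel $\Fq$ is elementary abelian. The intended argument goes through with Sylow in place of Cauchy: since $q$ divides $|U|$ and $\gcd(q,q-1)=1$, a Sylow $p$-subgroup of $U$ has order $q$ and hence equals the unique normal Sylow $p$-subgroup $\Fq$ of $F_q$, so $\Fq\subseteq U$ and the rest of your cyclic-quotient argument is unchanged.
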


\begin{proof}
This follows immediately from Theorem \ref{Field_def_Frob} by noting that $F_q$ is a 2-transitive Frobenius group with cyclic Frobenius complement.

\end{proof}

\begin{example}
\label{higher_genus_examples}
In the table below we give examples of the above theorem. We denote by $\QQ(\zeta_p)^+$ the maximal totally real subfield of the $p$-th cyclotomic field $\QQ(\zeta_p)$.  We also write $L_f$ for the endomorphism field of $J_f$.

See also the table at the end of Section \ref{Intro} for more examples of Corollary \ref{Field_def_AGL(1,q)}.  

\begin{table}[ht]

\centering 
\small
\begin{tabular}{c c c c} 

\hline
${\rm Gal}(f)$ & $\End^0(J_f)$ & $L_f$ & $f(x)$ \\ [0.5ex]

\hline 
$F_7$ & $\QQ(\zeta_7)^+$ & $\QQ(\zeta_7)^+$ & $x^7 - 7x^5 + 14x^3 - 7x - 13$ \\ 
$F_{11}$ & $\QQ(\zeta_{11})^+$ & $\QQ(\zeta_{11})^+$ & $x^{11} - 22x^9 + 176x^7 - 616x^5 + 880x^3 - 352x - 88$ \\
$F_{13}$ & $\QQ(\zeta_{13})^+$ & $\QQ(\zeta_{13})^+$ & $x^{13} + 13x^{11} + 65x^9 + 156x^7 + 182x^5 + 91x^3 + 13x - 2$ \\ 
$F_{17}$ & $\QQ(\zeta_{17})^+ $ & $\QQ(\zeta_{17})^+ $ & $x^{17} + 17x^{15} + 119x^{13} + 442x^{11} + 935x^9 +$ \\
& & & $1122x^7 + 714x^5 + 204x^3 + 17x - 1 $ \\
$F_{19}$ & $\QQ(\zeta_{19})^+ $ & $\QQ(\zeta_{19})^+ $ & $x^{19} + 19x^{17} + 152x^{15} + 665x^{13} + 1729x^{11} +$ \\
& & & $2717x^9 + 2508x^7 + 1254x^5 + 285x^3 + 19x - 1$ \\
[1ex] 
\hline 
\end{tabular}

\end{table}

\end{example}
\pagebreak

\begin{proposition}
\label{field_def_CM_Dm}
Let $f \in K[x]$ be an irreducible polynomial of odd degree $n$ with Galois group $D_n$.

Suppose $E= {\rm End}^0(J_f)$ is a number field of dimension $n-1$ over $\QQ$. Then $L \cap K(f)$ is the unique extension $F/K$ of degree $2$ contained in $K(f)$.
\end{proposition}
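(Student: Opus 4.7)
Plan. The crux is to exploit the fact that for $n$ odd the dihedral group $D_n$ has very few normal subgroups, combined with the observation that $L \cap K(f)$ is automatically Galois over $K$. I would set $F' := L \cap K(f)$ and $H := \Gal(K(f)/F') \leq D_n$, and aim to show $H \leq C_n$, since this is equivalent to $F \subseteq F' \subseteq L$.

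First, because $L/K$ and $K(f)/K$ are both Galois, so is $F'/K$; hence $H$ is a \emph{normal} subgroup of $D_n$. For $n$ odd, inversion preserves every subgroup of the rotation subgroup $C_n$, whereas any dihedral subgroup $D_d$ with $d < n$ has $n/d > 1$ distinct conjugates under $C_n$. Consequently the normal subgroups of $D_n$ are exactly $\{C_d : d \mid n\}$ together with $D_n$ itself, and the desired conclusion $H \leq C_n$ will follow once I have ruled out $H = D_n$, i.e.\ $F' = K$.

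To rule out $F' = K$: if this held, then $L$ and $K(f)$ would be linearly disjoint over $K$, giving $\Gal(L(f)/L) \cong \Gal(K(f)/K) = D_n$ acting on $R_f$ in the dihedral fashion. The stabiliser of any root is a reflection that fixes that root and pairs up the remaining $n-1$, yielding $(n-1)/2$ orbits on $R_f \setminus \{\alpha\}$. Applying Proposition \ref{bound_on_dim_over_K} with $L$ in place of $K$ would then force $\dim_\QQ \End^0_L(J_f) \leq (n-1)/2$; but by definition $\End^0_L(J_f) = E$ has dimension $n-1 > (n-1)/2$, a contradiction. I expect the main conceptual step to be noticing that Proposition \ref{bound_on_dim_over_K} can be invoked over $L$ rather than merely over $K$, since it is this application that eliminates the residual case $F' = K$; everything else reduces to elementary group theory for $D_n$.
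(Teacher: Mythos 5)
Your proof is correct and follows essentially the same route as the paper: the decisive step in both is to rule out $L \cap K(f) = K$ by applying Proposition \ref{bound_on_dim_over_K} over $L$, where the point stabiliser of a root (a reflection) has only $(n-1)/2$ orbits on $R_f\setminus\{\alpha\}$, contradicting $\dim_\QQ E = n-1$. The only difference is minor and preliminary: you constrain $F' = L\cap K(f)$ via normality of $\Gal(K(f)/F')$ and the classification of normal subgroups of $D_n$ for odd $n$, whereas the paper gets $F' \in \{K, F\}$ from the divisibility $[L:K] \mid [E:\QQ] = n-1$.
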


\begin{proof}
The proof is similar to that of Theorem \ref{Field_def_Frob}. Indeed, $[L:K]$ divides $[E:\QQ]=n-1$, and $[K(f):K]=2n$, so $[L \cap K(f):K]$ divides $\gcd(2n,n-1) = \gcd(2,n-1) =2$. As $D_n$ has a unique normal subgroup of index 2, we deduce the Galois extension $ L \cap K(f)/K$ is a subfield of the unique degree 2 extension $F/K$ contained in $K(f)$.

Let $F' = L\cap K(f) $. By the above either $F'= K$ or $F' = F$ thus $F'/K$ is an extension of degree $d$ diving $2$. 
As $D_n$ is a Frobenius group, ${\rm Gal}(K(f)/F') \cong {\rm Gal}(L(f)/L)$ acts transitively on $R_f$, and the stabiliser of a point $\alpha \in R_f$, is cyclic of order $2/d$, and has $\frac{n-1}{2/d} = \frac{d(n-1)}{2}$ many orbits on $R_f\setminus \{ \alpha\}$. Thus, by Proposition \ref{bound_on_dim_over_K} we have that
$$n-1 = \dim_{\QQ}\End^0(J_f) = \dim_{\QQ}\End^0_{L}(J_f) \leq \frac{d(n-1)}{2}.$$
Hence $d=2$, and $L\cap K(f) = F$.
\end{proof}
To finish the proof of Theorem \ref{Fields_of_definition_thm}, we require the following observation.

\begin{proposition}
\label{minimalfield_2group}
The Galois group of $L/K$, the minimal extension over which all endomorphisms of $A$ are defined, fits into the exact sequence
$$ 0 \rightarrow H \rightarrow {\rm Gal}(L/K) \rightarrow { \rm Gal}(K(A[2])/K),$$
where $H$ is a (possibly trivial) elementary abelian 2-group.
\end{proposition}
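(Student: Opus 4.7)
The plan is to exploit Silverberg's theorem \cite[Theorem 4.1]{Sberg92}, which applied with $m = 4$ gives $L \subseteq K(A[4])$. Granted this, the map $\Gal(L/K) \to \Gal(K(A[2])/K)$ in the statement is to be interpreted as follows: given $\sigma \in \Gal(L/K)$, lift it to some $\tilde{\sigma} \in G_K$ and restrict to $K(A[2])$. Two different lifts differ by an element of $\Gal(\Bar{K}/L)$, whose image in $\Gal(K(A[2])/K)$ lies in $\Gal(K(A[2])/L \cap K(A[2]))$, so the map is well-defined modulo that subgroup and its kernel $H$ is canonically identified with $\Gal(L / L \cap K(A[2]))$.

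By the standard restriction isomorphism, $H \cong \Gal(L \cdot K(A[2]) / K(A[2]))$, and since $L \cdot K(A[2]) \subseteq K(A[4])$ this is a quotient of $\Gal(K(A[4])/K(A[2]))$. It therefore suffices to check that $\Gal(K(A[4])/K(A[2]))$ is contained in an elementary abelian $2$-group.

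For this, consider the Galois embedding $\Gal(K(A[4])/K) \hookrightarrow \GL(A[4]) \cong \GL_{2g}(\ZZ/4\ZZ)$, under which $\Gal(K(A[4])/K(A[2]))$ maps into the kernel of the reduction $\GL_{2g}(\ZZ/4\ZZ) \twoheadrightarrow \GL_{2g}(\ZZ/2\ZZ)$. Elements of this kernel are precisely those of the form $I + 2M$ with $M \in M_{2g}(\ZZ/2\ZZ)$, and the congruence $(I+2M)(I+2N) \equiv I + 2(M+N) \pmod{4}$ shows it is an elementary abelian $2$-group. Hence so are $\Gal(K(A[4])/K(A[2]))$, all of its quotients, and in particular $H$.

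The substantive input is Silverberg's theorem; with that in hand the remainder is a direct computation with the principal congruence subgroup of level $2$ in $\GL_{2g}(\ZZ/4\ZZ)$, and the only real expositional point is to set up the map $\Gal(L/K) \to \Gal(K(A[2])/K)$ carefully.
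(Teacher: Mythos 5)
Your proposal is correct and follows essentially the same route as the paper: both rest on Silverberg's theorem to place $L$ inside $K(A[4])$ and then on the observation that the kernel of ${\rm GL}_{2g}(\ZZ/4\ZZ) \rightarrow {\rm GL}_{2g}(\ZZ/2\ZZ)$, namely $I_{2g}+2M_{2g}(\ZZ/4\ZZ)\cong M_{2g}(\ZZ/2\ZZ)$, is an elementary abelian $2$-group. Your write-up merely makes explicit the identification of $H$ with ${\rm Gal}(L/L\cap K(A[2]))$ and the compositum argument, which the paper leaves implicit.
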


\begin{proof}
By Silverberg's result \cite[Theorem 4.1]{Silverberg}, we have that $L$ is contained in $K(A[4])$. 
Furthermore, ${\rm Gal}(K(A[2])/K)$ and ${\rm Gal}(K(A[4])/K)$ are subgroups of ${\rm GL}_{2g}(\ZZ/2\ZZ)$ and ${\rm GL}_{2g}(\ZZ/4\ZZ)$ respectively. Thus, as the following sequence is exact
$$ 0 \rightarrow I_{2g} + 2M_{2g}(\ZZ/4\ZZ) \rightarrow {\rm GL}_{2g}(\ZZ/4\ZZ) \rightarrow {\rm GL}_{2g}(\ZZ/2\ZZ) \rightarrow 0$$
and $I_{2g} + 2M_{2g}(\ZZ/4\ZZ) \cong M_{2g}(\ZZ/2\ZZ)$ is an elementary abelian 2-group (under addition) we are done.
\end{proof}

\begin{corollary}
Let $f \in K[x]$ be an irreducible polynomial of degree $n$ with Galois group $D_n$ and $n \equiv 3 \mod{4}$.

Suppose $E= {\rm End}^0(J_f)$ is a number field of dimension $n-1$ over $\QQ$. Then $L$ is the unique extension $F/K$ of degree $2$ contained in $K(f)$.
\end{corollary}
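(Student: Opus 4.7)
The plan is to combine the inclusion $F \subseteq L$ from Proposition \ref{field_def_CM_Dm} with a matching upper bound $[L:K] \leq 2$, relying on Proposition \ref{minimalfield_2group} together with the parity constraint $n \equiv 3 \pmod{4}$.

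First, since $E = \End^0(J_f)$ is a number field, the injection $\Gal(L/K) \hookrightarrow \Aut(E)$ together with the standard fact that $|\Aut(E)|$ divides $[E:\QQ]$ (visible from the Galois closure of $E/\QQ$) gives $[L:K] \mid n-1$. Writing $n = 4k+3$, we have $n - 1 = 2m$ with $m = 2k+1$ odd. Proposition \ref{minimalfield_2group} then supplies a normal subgroup $H \trianglelefteq \Gal(L/K)$ which is an elementary abelian $2$-group, with quotient embedding into $\Gal(K(J_f[2])/K) = \Gal(K(f)/K) \cong D_n$, a group of order $2n = 4m+2$. Since $|H|$ is a power of $2$ dividing $2m$, we have $|H| \in \{1, 2\}$.

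I would then split on $|H|$. If $|H| = 1$, then $|\Gal(L/K)|$ divides $\gcd(2m,\, 4m+2) = 2$. If $|H| = 2$, then $|\Gal(L/K)/H|$ divides $\gcd(m,\, 4m+2) = 1$ (because $m$ is odd), so once more $|\Gal(L/K)| = 2$. Either way $[L:K] \leq 2$, which together with $F \subseteq L$ from Proposition \ref{field_def_CM_Dm} forces $L = F$. I do not anticipate a serious obstacle; the only real content is the arithmetic observation that $n \equiv 3 \pmod{4}$ makes $|D_n| = 2n \equiv 2 \pmod{4}$, and this parity mismatch with $n-1 = 2m$ (where $m$ is odd) is precisely what collapses $\Gal(L/K)$ to order at most $2$ once Proposition \ref{minimalfield_2group} is in hand.
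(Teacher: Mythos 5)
Your argument is correct and follows essentially the same route as the paper: the divisibility $[L:K]\mid n-1$ from the injection $\Gal(L/K)\hookrightarrow\Aut(E)$, Proposition \ref{minimalfield_2group} combined with the parity of $|D_n|=2n$ and of $n-1$ to force $[L:K]\leq 2$, and Proposition \ref{field_def_CM_Dm} to conclude $L=F$. The paper phrases the middle step as ``$\Gal(L/K)$ is a $2$-group and $4\nmid n-1$'' rather than your explicit case split on $|H|$, but the content is identical.
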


\begin{proof}
As $E$ is a number field, the discussion preceding Theorem \ref{Field_def_Frob} shows $[L:K]$ divides $n-1$. But the order of $ {\rm Gal}(K(f)/K) = {\rm Gal}(K(J_f[2])/K)$ is $2n$, so by Proposition \ref{minimalfield_2group} we have that ${\rm Gal}(L/K)$ is a 2-group.

Since $4$ does not divide $n-1$, it does not divide $[L:K]$ either. Thus $[L:K] \leq 2$. Hence, applying Proposition \ref{field_def_CM_Dm} we find $L=F$.  
\end{proof}

\begin{proposition}
\label{field_def_not_div_by_2}
Suppose $A/K$ is an abelian surface with CM by a degree 4 field and $2 \nmid [K(A[2]):K]$. Then $[L:K] \leq 2$ and $K$ contains a real quadratic field.
\end{proposition}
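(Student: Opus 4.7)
The plan is to establish the two conclusions in turn, combining Proposition \ref{minimalfield_2group} with a short input from the theory of complex multiplication.

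For the bound $[L:K]\le 2$, Proposition \ref{minimalfield_2group} furnishes the exact sequence
$$0\to H\to \Gal(L/K)\to \Gal(K(A[2])/K)$$
with $H$ an elementary abelian $2$-group, while the standard injection $\Gal(L/K)\hookrightarrow \Aut(E)$ forces $|\Gal(L/K)|$ to divide $|\Aut(E)|$, which is $2$ or $4$ for a quartic CM field $E$. As $\Gal(K(A[2])/K)$ has odd order by hypothesis, the rightmost map is trivial, so $\Gal(L/K)=H$ is an elementary abelian $2$-group of order at most $4$. Ruling out $\Gal(L/K)\cong(\ZZ/2\ZZ)^2$ is the key step: such a subgroup of $\Aut(E)$ would force $E$ to be biquadratic CM, but a direct enumeration shows every CM type on a biquadratic quartic is induced from one of its imaginary quadratic subfields. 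Any absolutely simple abelian variety with such CM is therefore an elliptic curve, contradicting $\End^0(A)=E$ being the endomorphism algebra of an abelian surface. Hence $[L:K]\le 2$.

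For the real quadratic subfield claim, I would invoke the main theorem of complex multiplication. The pair $(A,\iota\colon E\hookrightarrow \End^0_L(A))$ is an absolutely simple CM abelian variety defined over $L$, so by Shimura's theorem on moduli fields $L$ contains the reflex field $E^*$ of its CM type. The reflex CM type on $E^*$ is again primitive, so by the same enumeration as above $E^*$ is not biquadratic; hence $E^*$ is a cyclic or non-Galois quartic CM field, which has a unique quadratic subfield, namely its maximal totally real subfield $(E^*)^+$. Finally, $E^*\subseteq L$ together with $[L:K]\le 2$ yields $[E^*:E^*\cap K]\le 2$, so $E^*\cap K$ equals either $E^*$ or $(E^*)^+$; in either case $K$ contains the real quadratic field $(E^*)^+$.

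The step I anticipate requiring the most care is the appeal to Shimura's theorem on moduli fields (together with the verification that $E^*$ is itself non-biquadratic). If one preferred to avoid invoking CM theory externally, the same conclusion $E^*\subseteq L$ follows by analysing the $E\otimes L$-module structure on the tangent space $T_0 A$: the CM type appears as a pair of $L$-valued characters of $E$, and the subfield of $L$ they generate is precisely $E^*$.
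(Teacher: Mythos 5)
Your proof is correct, and it rests on the same core ingredients as the paper's: the classification of primitive CM types on a quartic field (ruling out the biquadratic case), Proposition \ref{minimalfield_2group} combined with the injection $\Gal(L/K)\hookrightarrow\Aut(E)$, and Shimura's result that the reflex field $E^*$ is contained in any field of definition of $(A,\iota)$. The difference is organizational but worth noting: the paper first splits according to whether $E$ is cyclic or non-Galois, handling the non-Galois case by a separate citation (Shimura, Proposition 5.17(5)) which yields $K\supseteq (E^*)^+$ directly and without using the hypothesis on $[K(A[2]):K]$, and reserving the $2$-group argument for the cyclic case; you instead run the $\Aut(E)$-bound and the elimination of $C_2\times C_2$ uniformly, and then deduce the real quadratic subfield in both cases from $E^*\subseteq L$ together with $[L:K]\le 2$ and the uniqueness of the quadratic subfield of a non-biquadratic quartic CM field. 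This buys a single streamlined argument at the cost of one extra verification (that $E^*$ is again a non-biquadratic quartic CM field, which follows from primitivity of the reflex type, as you say). One sentence of yours is garbled --- an absolutely simple abelian variety cannot literally ``be an elliptic curve'' while having CM by a quartic field; the correct contradiction, which your surrounding text makes clear you intend, is that an imprimitive CM type forces $A\thicksim B^2$ for a CM elliptic curve $B$, so $\End^0(A)$ would be $M_2$ of an imaginary quadratic field rather than the field $E$.
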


\begin{proof}
By assumption $E = {\rm End}^0(A)$ is a degree 4 CM field. Let us note that as $A$ is absolutely simple it has primitive CM type \cite[\S8.2, page 60]{Shimura_Book_Ab_vars_with_CM}. By \cite[\S8.4 Examples, pages 63-65]{Shimura_Book_Ab_vars_with_CM} there are only two such CM types for a genus 2 curve.  One of which is not Galois and the other is cyclic of degree 4. The minimal field of definition of the endomorphisms is $L = E^*K$ the compositum of $K$ and the reflex field of $E$, see \cite[Proposition 30, page 65]{Shimura_Book_Ab_vars_with_CM}.

In the non-Galois case, \cite[Proposition 5.17(5), page 131]{Shimura_Book_Intro_automorphic_functions} gives us that $K$ contains the real quadratic subfield of the reflex field, and thus also $[L:K] \leq 2$. In the other case, the reflex field $E^*$ is equal to $E=\End^0(A)$. As $[L:K]$ divides $4$ the image of ${\rm Gal}(L/K)$ in ${\rm Gal}(K(A[2])/K)$ is trivial, thus by Proposition \ref{minimalfield_2group} we have that ${\rm Gal}(L/K)$ has exponent dividing 2. Hence ${\rm Gal}(L/K)$ is isomorphic to either the trivial group, $C_2$, or $C_2 \times C_2$. If  ${\rm Gal}(L/K) \cong C_2 \times C_2$ held, then $E^*=E$ would have Galois group ${\rm Gal}(E^*/\QQ) \cong C_2 \times C_2$, but this implies the CM type of $A$ is not primitive \cite[\S8.4 Examples, 2A, page 64]{Shimura_Book_Ab_vars_with_CM}, contrary to what we have shown. Thus $[L:K] \leq 2$, and as $E$ contains a real quadratic subfield, we see that $K$ must too.
\end{proof}

\begin{proposition}
\label{field_def_C5}
Suppose $A/K$ is an abelian surface and ${\rm Gal}(K(A[2])/K) \cong C_5$. Then $[L:K] \leq 2$.
\end{proposition}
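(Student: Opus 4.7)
The plan is to combine the elementary abelian 2-group structure of $\Gal(L/K)$ from Proposition \ref{minimalfield_2group} with the obstruction to order-5 elements in $\Gal(L/K)$ from Corollary \ref{5_does_not_divide_deg_of_extension}, and then to use the already-established classification of $\End^0(A)$ to rule out the only remaining problematic case, $\Gal(L/K) \cong C_2 \times C_2$.

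First I would argue that $\Gal(L/K)$ is an elementary abelian 2-group. Since $g=2$, Corollary \ref{5_does_not_divide_deg_of_extension} tells us that $5 \nmid [L:K]$, so the image of $\Gal(L/K)$ in $\Gal(K(A[2])/K) \cong C_5$ — a subgroup of a group of prime order 5 — must be trivial. By Proposition \ref{minimalfield_2group} this forces $\Gal(L/K)$ to be (isomorphic to) a subgroup of an elementary abelian 2-group, so its exponent divides 2.

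Next I would identify $E = \End^0(A)$ to constrain $|\Gal(L/K)|$. The group $\Gal(K(A[2])/K)\cong C_5$ acts faithfully on the 4-dimensional $\FF_2$-space $A[2]$; since 2 has order 4 modulo 5, Lemma \ref{faithful_module} shows that the action is irreducible. Thus Proposition \ref{l-maximal} applies: $E$ is a number field with $[E:\QQ]\in\{1,2,4\}$ (by Theorem \ref{ab_surface}, refined as Cases 1, 2, 3 there). Through the embedding $\Gal(L/K)\hookrightarrow \Aut(E)$, when $[E:\QQ]=1$ we get $L=K$, and when $[E:\QQ]=2$ we get $|\Aut(E)|=2$, so $[L:K]\leq 2$ immediately.

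The main obstacle is the CM-quartic case $[E:\QQ]=4$, since in principle $\Gal(L/K)$ could be $C_2\times C_2$ (an exponent-2 group of order 4 that embeds in $\Aut(E)$). This is exactly the situation handled by Proposition \ref{field_def_not_div_by_2}: because $[K(A[2]):K] = 5$ is odd, that proposition directly yields $[L:K]\leq 2$ (and incidentally shows $K$ contains a real quadratic subfield of the reflex field). This exhausts all three cases from Theorem \ref{ab_surface} and completes the argument.
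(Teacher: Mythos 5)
Your proof is correct and takes essentially the same route as the paper's: classify $E=\End^0(A)$ via Theorem \ref{ab_surface}, dispatch the cases $[E:\QQ]\leq 2$ with the embedding $\Gal(L/K)\hookrightarrow{\rm Aut}(E)$, and invoke Proposition \ref{field_def_not_div_by_2} (using that $[K(A[2]):K]=5$ is odd) for the quartic CM case. Your opening observation that $\Gal(L/K)$ has exponent dividing $2$ is valid but redundant, since the three-case analysis already settles everything.
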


\begin{proof}
By Theorem \ref{ab_surface}, we have that ${\rm End}^0(A)$ is either isomorphic to $\QQ$, a real quadratic field, or a degree 4 CM field. In the first two cases $[L:K] \leq 2$ follows from the discussion before Theorem \ref{Field_def_Frob}, and in the last case it follows from Proposition \ref{field_def_not_div_by_2}.
\end{proof}

\begin{corollary}
\label{C5_with_CM}
 Let $A$ be as in Proposition \ref{field_def_C5}. If $A$ has CM, then $K$ contains a real quadratic field with discriminant congruent to  $5$ modulo $8$.
\end{corollary}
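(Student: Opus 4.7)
The plan is to combine Proposition \ref{field_def_C5}, Theorem \ref{ab_surface}, and Proposition \ref{field_def_not_div_by_2} with a short decomposition-group analysis at the prime $2$.

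By Proposition \ref{field_def_C5} together with Theorem \ref{ab_surface}, the hypothesis that $A$ has CM forces $E := \End^0(A)$ to be a quartic CM field in which $2$ is totally inert. Proposition \ref{field_def_not_div_by_2} then furnishes a real quadratic field $F_0 \subseteq K$, identified in its proof as $F_0 = E^* \cap \RR$ (the reflex $E^*$ coinciding with $E$ in the cyclic case). Since a real quadratic field has discriminant congruent to $5$ modulo $8$ if and only if $2$ is inert in it, the problem reduces to proving $2$ is inert in $F_0$. In the cyclic case this is immediate: $F_0$ is the unique quadratic subfield of $E$, and total inertia of $2$ in $E$ passes to inertia in $F_0$.

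The main obstacle is the non-Galois case, in which one must transfer total inertia from $E$ to its reflex $E^*$. My plan is to pass to the Galois closure $\tilde E$, with $\Gal(\tilde E/\QQ) \cong D_4 = \langle r, s : r^4 = s^2 = 1,\ srs = r^{-1}\rangle$, and write $E = \tilde E^{H}$, $E^* = \tilde E^{H^*}$ for reflections $H$, $H^*$ in the two conjugacy classes. Fix a prime $\mathfrak P \mid 2$ of $\tilde E$ with inertia $I$ and decomposition group $D$. After replacing $\mathfrak P$ by a suitable Galois conjugate, unramifiedness of $2$ in $E$ yields $I \subseteq H$. The crucial observation is that $I = H$ cannot occur: the normalizer of $H$ in $D_4$ is a Klein four-group, so $I = H$ combined with $I \trianglelefteq D$ would give $|D/I| \leq 2$, contradicting $|D/I| \geq f(E/\QQ) = 4$. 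Hence $I = 1$, and $D$ is a cyclic subgroup of $D_4$ of order at least $4$; the only such subgroup is $D = \langle r \rangle$.

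Since $\langle r\rangle$ is normal in $D_4$ and meets every reflection subgroup trivially, a direct double-coset count shows that $\tilde E^{H^*}$ has a single prime above $2$ with $e = 1$ and $f = 4$. Thus $2$ is totally inert in $E^*$, hence inert in $F_0$, as required.
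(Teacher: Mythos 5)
Your proof is correct and takes essentially the same route as the paper: both arguments pass to the degree-$8$ Galois closure $EE^*$ with group $D_4$, show that $2$ is unramified there with decomposition group the unique cyclic subgroup $\langle r\rangle$ of order $4$ (you via the normalizer of the inertia group, the paper by ruling out a $C_4$ quotient of $D_4$), and then deduce that $2$ is inert in the real quadratic subfield of the reflex field. The only cosmetic difference is the final read-off — your double-coset count shows $2$ is totally inert in $E^*$ directly, while the paper descends to the quadratic subfield $\QQ(\sqrt{dd'})$ fixed by $\langle r\rangle$ and concludes via the congruence $dd'\equiv 1 \bmod 8$.
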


\begin{proof}
 By Proposition \ref{field_def_not_div_by_2}, $K$ contains a real quadratic field.
Let us now show the discriminant condition. Recall that discriminant of a real quadratic field is congruent to 5 modulo 8 if and only if 2 is inert. Theorem \ref{deg 5 thm} gives us that 2 is inert in $E/\QQ$, thus in the case that $E$ is Galois, the claim follows immediately by the above.

Let us suppose that $E$ is not Galois. Write $\QQ(\sqrt{d})$ for the real quadratic subfield of $E$, with $d \equiv 5 \mod{8}$ square-free (and hence equal to the discriminant of $\QQ(\sqrt{d})$). The reflex field of $E$ is also not a Galois extension of $\QQ$, and has a totally real subfield $\QQ(\sqrt{d'})$, where $d' \neq d$ is square-free \cite[8.4 Examples, 2C]{Shimura_Book_Ab_vars_with_CM}. By what we have already shown above, it suffices to prove $d'\equiv 5 \mod{8}$.

The number field $M$ obtained as the compositum of $E$ and its reflex $E^*$ is a Galois extension of $\QQ$, with Galois group ${\rm Gal}(M/\QQ) \cong D_4$ of order 8. We claim that $2$ splits in $M/\QQ$ and each prime above 2 has inertia degree 4. As 2 is inert in $E/\QQ$, it suffices to show $2$ splits in $M/\QQ$. We shall rule out the other possibilities. Let $D$ denote the decomposition group of a prime above $2$ in $M$. Note that as $D_4$ is not cyclic, $2$ is not totally inert in the extension $M/\QQ$. Next, suppose $2$ ramifies in $M/\QQ$, then by comparing orders, we see $D$ is isomorphic to $D_4$, but as the inertia degree is 4, it also has a cyclic quotient of order 4, which $D_4$ does not. Thus $2$ is not ramified in the extension $M/\QQ$. We deduce that $D$ is isomorphic to the unique degree 4 cyclic subgroup of $D_4$.

By examining the subgroup structure of $D_4$, we see the subfield fixed by $D$ is $\QQ(\sqrt{dd'})$ the quadratic subfield of $M$ not contained in $E$ or $E^*$. By definition of $D$ we have that 2 splits in $\QQ(\sqrt{dd'})$. Equivalently, $dd' \equiv 5d' \equiv 1 \mod{8}$, from which it follows that $d' \equiv 5 \mod{8}$.

\end{proof}

\bibliographystyle{alpha}
\bibliography{References}

\newcommand{\etalchar}[1]{$^{#1}$}
\begin{thebibliography}{SPDyD94}

\bibitem[BCP97]{magma}
Wieb Bosma, John Cannon, and Catherine Playoust.
\newblock The {M}agma algebra system. {I}. {T}he user language.
\newblock {\em J. Symbolic Comput.}, 24(3-4):235--265, 1997.
\newblock Computational algebra and number theory (London, 1993).

\bibitem[BG16]{Burness}
Timothy~C. Burness and Michael Giudici.
\newblock {\em Classical groups, derangements and primes}, volume~25 of {\em
  Australian Mathematical Society Lecture Series}.
\newblock Cambridge University Press, Cambridge, 2016.

\bibitem[BSS{\etalchar{+}}16]{LMFDB_genus2_curves}
Andrew~R. Booker, Jeroen Sijsling, Andrew~V. Sutherland, John Voight, and Dan
  Yasaki.
\newblock A database of genus-2 curves over the rational numbers.
\newblock {\em LMS J. Comput. Math.}, 19(suppl. A):235--254, 2016.

\bibitem[FD93]{noncomalg}
Benson Farb and R.~Keith Dennis.
\newblock {\em Noncommutative algebra}, volume 144 of {\em Graduate Texts in
  Mathematics}.
\newblock Springer-Verlag, New York, 1993.

\bibitem[FKRS12]{FKRS12}
Francesc Fit\'{e}, Kiran~S. Kedlaya, V\'{i}ctor Rotger, and Andrew~V.
  Sutherland.
\newblock Sato-{T}ate distributions and {G}alois endomorphism modules in genus
  2.
\newblock {\em Compos. Math.}, 148(5):1390--1442, 2012.

\bibitem[GK17]{GK17}
Robert Guralnick and Kiran~S. Kedlaya.
\newblock Endomorphism fields of abelian varieties.
\newblock {\em Res. Number Theory}, 3:Art. 22, 10, 2017.

\bibitem[Isa94]{Isaacs}
I.~Martin Isaacs.
\newblock {\em Character theory of finite groups}.
\newblock Dover Publications, Inc., New York, 1994.
\newblock Corrected reprint of the 1976 original [Academic Press, New York;
  MR0460423 (57 \#417)].

\bibitem[JLY02]{Jensen_Ledet_Yui}
Christian~U. Jensen, Arne Ledet, and Noriko Yui.
\newblock {\em Generic polynomials: Constructive aspects of the inverse Galois
  problem}, volume~45 of {\em Mathematical Sciences Research Institute
  Publications}.
\newblock Cambridge University Press, Cambridge, 2002.

\bibitem[{LMF}18]{lmfdb}
The {LMFDB Collaboration}.
\newblock The l-functions and modular forms database.
\newblock \url{http://www.lmfdb.org}, 2018.
\newblock [Online; accessed 10 December 2018].

\bibitem[Lom19]{Lom}
Davide Lombardo.
\newblock Computing the geometric endomorphism ring of a genus 2 jacobian.
\newblock {\em Math. Comp.}, 88:889--929, 2019.

\bibitem[MN19]{Molin_Neurohr_Period_matrices}
Pascal Molin and Christian Neurohr.
\newblock Computing period matrices and the {A}bel-{J}acobi map of
  superelliptic curves.
\newblock {\em Math. Comp.}, 88(316):847--888, 2019.

\bibitem[Mor77]{Mori_II}
Shigefumi Mori.
\newblock The endomorphism rings of some abelian varieties. {II}.
\newblock {\em Japan. J. Math. (N.S.)}, 3(1):105--109, 1977.

\bibitem[Mum70]{Mumford_ab_vars_book}
David Mumford.
\newblock {\em Abelian varieties}.
\newblock Tata Institute of Fundamental Research Studies in Mathematics, No. 5.
  Published for the Tata Institute of Fundamental Research, Bombay; Oxford
  University Press, London, 1970.

\bibitem[Pas68]{Passman_Permutation_groups}
Donald Passman.
\newblock {\em Permutation groups}.
\newblock W. A. Benjamin, Inc., New York-Amsterdam, 1968.

\bibitem[R{\'{e}}m20]{Remond}
Ga{\"{e}}l R{\'{e}}mond.
\newblock Degr\'{e} de d\'{e}finition des endomorphismes d'une vari\'{e}t\'{e}
  ab\'{e}lienne.
\newblock {\em J. Eur. Math. Soc. (JEMS)}, 22(9):3059--3099, 2020.

\bibitem[Sch68]{Schacher}
Murray~M. Schacher.
\newblock Subfields of division rings. {I}.
\newblock {\em J. Algebra}, 9:451--477, 1968.

\bibitem[Shi63]{Shimura63_albert_paper}
Goro Shimura.
\newblock On analytic families of polarized abelian varieties and automorphic
  functions.
\newblock {\em Ann. of Math. (2)}, 78:149--192, 1963.

\bibitem[Shi94]{Shimura_Book_Intro_automorphic_functions}
Goro Shimura.
\newblock {\em Introduction to the arithmetic theory of automorphic functions},
  volume~11 of {\em Publications of the Mathematical Society of Japan}.
\newblock Princeton University Press, Princeton, NJ, 1994.
\newblock Reprint of the 1971 original, Kan\^{o} Memorial Lectures, 1.

\bibitem[Shi98]{Shimura_Book_Ab_vars_with_CM}
Goro Shimura.
\newblock {\em Abelian varieties with complex multiplication and modular
  functions}, volume~46 of {\em Princeton Mathematical Series}.
\newblock Princeton University Press, Princeton, NJ, 1998.

\bibitem[Sil92]{Silverberg}
Alice Silverberg.
\newblock Fields of definition for homomorphisms of abelian varieties.
\newblock {\em J. Pure Appl. Algebra}, 77(3):253--262, 1992.

\bibitem[SPDyD94]{LMFDB_deg_5_fields}
A.~Schwarz, M.~Pohst, and F.~Diaz~y Diaz.
\newblock A table of quintic number fields.
\newblock {\em Math. Comp.}, 63(207):361--376, 1994.

\bibitem[Zar00]{Zar00}
Yuri~G. Zarhin.
\newblock Hyperelliptic {J}acobians without complex multiplication.
\newblock {\em Math. Res. Lett.}, 7(1):123--132, 2000.

\bibitem[Zar01]{Zar_modreps}
Yuri~G. Zarhin.
\newblock Hyperelliptic {J}acobians and modular representations.
\newblock In {\em Moduli of abelian varieties ({T}exel {I}sland, 1999)}, volume
  195 of {\em Progr. Math.}, pages 473--490. Birkh\"{a}user, Basel, 2001.

\bibitem[Zar05]{Zarhin_clifford}
Yuri~G. Zarhin.
\newblock Very simple representations: variations on a theme of {C}lifford.
\newblock In {\em Progress in {G}alois theory}, volume~12 of {\em Dev. Math.},
  pages 151--168. Springer, New York, 2005.

\end{thebibliography}

\end{document}